\def \N{{\mathbb N}}
\def \R{{\mathbb R}}
\def \1{{\mathbb 1}}
\theoremstyle{plain}
\newtheorem{theorem}{Theorem}
\newtheorem{proposition}{Proposition}
\newtheorem{definition}{Definition}
\newtheorem{lemma}{Lemma}
\theoremstyle{remark}
\newtheorem{remark}{Remark}
\newtheorem{Exemp}{Example}
\begin{document}
\title[Nonlinear differential equations]{Nonlinear differential equations in abstract  Banach subspace of $BC(\R)$.}
\author{Mohammed Bachir$^*$, Haifa Ben Fredj$^\dagger$}
%%%%%%%%%%%%%%%%%%%%%%%%%%%%%%%%%%%%%%%%%%%%%%%%%%%%%%%%%%%%%%%%%%%%%%%%%
%%%  Topmatter 
%%%%%%%%%%%%%%%%%%%%%%%%%%%%%%%%%%%%%%%%%%%%%%%%%%%%%%%%%%%%%%%%%%%%%%%%%
\date{14/11/2021}
\address{$^*$ Laboratoire SAMM 4543, Universit\'e Paris 1 Panth\'eon-Sorbonne, France}
\email{Mohammed.Bachir@univ-paris1.fr}

\address{$^\dagger$ MaPSFA, ESSTHS, University of Sousse-Tunisia}
\email{haifabenfredj@essths.u-sousse.tn}

\begin{abstract} We prove results of  existence of a solution (resp. existence and uniqness of a solution) for nonlinear differential equations of type $x'(t) +G(x,t) x(t) = F(x,t),$ in an abstract Banach subspace $X$ of the space of bounded real-valued continuous functions, satisfying some general and natural property. In our work, the functions $F$ and $G$ jointly depend on the variables $(x,t)\in X\times \R$. Several examples will be given, in various function spaces, to illustrate our results. The vector-valued framework is also considered.
\end{abstract}

\maketitle
\noindent {\bf 2010 Mathematics Subject Classification:} 34G20, 34D23, 47H10.

\noindent {\bf Keyword, phrase:} Nonlinear differential equations, Global attractivity, Periodic and pseudo almost periodic functions, Schauder and Banach-Picard fixed point theorems, Spaces of real and vector-valued continuous functions.

\section{Introduction}
Let $X$ be a Banach subspace of the Banach space  $(BC(\R),\|\cdot\|_{\infty})$  of all real-valued bounded continuous  functions equipped with the sup-norm and  let $F, G :X\to X$  be two functions. The goal of this paper is  to give existence of solutions $x\in X$ (under some conditions on the space $X$ and the functions $F$ and $G$) to the first order differential equations of type:
$$(E) \hspace{3mm} x'(t) +G(x,t) x(t) = F(x,t), \hspace{2mm} \forall  t\in \R$$
where, $F(x,t):=F(x)(t)$ and the same for $G$.
%We write simply $F(x)$ for $F(x,\cdot)$.
We prove in our main result (Theorem \ref{existence}) an existence result by using the Schauder's fixed point theorem and another result of existence and uniqueness by using the Banach-Picard theorem.
Our formalism encompasses several examples of spaces $X$ of $BC(\R)$ in particular the space $PAP(\R)$ of all pseudo almost periodic functions (we will give the precise definition in Section \ref{Sexemple}) which has attracted the interest of many authors in recent years. For references in this aera, we refer for example to the non-exhaustive list of works \cite{AC, BEFDB, JBGP, DLN, SLGX, Lb, LHY,LS, WXD}. Our contributions in this paper are axed  in the following directions:

--- The equations treated in the litterature in the $PAP(\R)$ space (see for instance the above references), are of the the form $x'(t)+a(t)x(t)=H(x(t),t)$, where $a$ is a function depending only on $t$ and $H$ is a function defined on $\R\times \R$. In other words, the map $x\mapsto a(\cdot)x$ is a linear operator.  In our results, we deal with operators not necessarily linear, that is, we replace $a(t)$ with a more general function $G(x,t)$ depending on both $x$ and $t$. Moreover,  the function $H$ is also replaced by a more general function depending on $x$ and $t$. For example  for any $\alpha, \beta\in L^1(\R)$  such that $0<\|\alpha\|_1\leq 1$ (where $\|x\|_1=\int_{-\infty}^{+\infty}|x(s)|ds$ and $x*\alpha(t)=\int_{-\infty}^{+\infty} x(s)\alpha(s-t) ds$ denotes the convolution of $x$ and $\alpha$), the pseudo almost periodic functions
\begin{eqnarray*}
F(x,t)&=&\frac{1}{3}(sin(t)+sin(\sqrt{2}t)+ (1+t^2)^{-1}x*\alpha(t))\\
G(x,t)&=&3+sin(2t)+(1+t^2)^{-1}cos(x*\beta(t)),
\end{eqnarray*}
 can not be writen in the form $H(x(t),t)$ for some  function $H$ defined on $\R\times \R$. However, with these functions, our results apply and give  at last one pseudo almost periodique solution to the equation $(E)$ (see examples  in Section \ref{Sexemple} for  details). 

--- Our approach unifies several spaces of functions. We deal with abstract Banach subspace $X$ of $BC(\R)$ including some classical spaces as  the space of all continuous $w$-periodic functions, the space of all almost periodic functions or the space all pseudo almost periodic functions (see Proposition \ref{algebra}). Thus, depending on the type of functions $F$ and $G$, the solutions will exist in the adequat corresponding type of space (see examples in Section \ref{Sexemple}). It is the interest of working in an abstract subspace $X$ of $BC(\R)$ satisfying a natural condition that we call $(H_0)$ in this paper. Moreover, our result hold also for Banach subspaces $X$ of vector-valued functions, that is, Banach spaces of functions from $\R$ into a Banach space $E$ of finite dimension. 
\vskip5mm
This paper is organized as follows. In section \ref{S1}, we give our first main result of existence of solutions of the equation $(E)$ under some general hypothesis (Theorem \ref{existence}) and we will then give several examples to illustrate this result. In Section \ref{Global} we give our second main theorem consisting on the attractivity of solutions (Theorem \ref{attractivity}). Finally, in Section \ref{Vector}, we  will discuss the case of vector valued function spaces.
%\section{The property $(H_0)$} \label{H_0} 
\section{The main result}  \label{S1}
Let $X$ be a Banach subspace of $BC(\R)$. The set $B_X(0,r)$ denotes the closed ball of $X$ centred at $0$ with radious $r>0$. For each $l,r>0$, we define the following closed convex subsets of $X$:  $$B_{[l,r]}:=\lbrace x\in X: x(t)\in [l,r], \forall t\in \R\rbrace,$$
$$X_{[l,+\infty[}:=\lbrace x\in X: x(t)\in [l,+\infty[, \forall t\in \R\rbrace,$$
%If $F:X\to X$ is a function, we use the notation $F(x)(t):=F(x,t)$ for $x\in X$ and $t\in \R$. 
We need to introduce, the following well defined operator for each $l>0$ (see Lemma \ref{Contint}):
\begin{eqnarray*}
T: BC(\R) \times BC(\R)_{[l,+\infty[}&\to& BC(\R)\\
(f,g) &\mapsto& [t\mapsto \int_{-\infty}^t e^{-\int_s^t g(u)du} f(s) ds],
\end{eqnarray*}
It is classical and easy to see that for every $(f,g)\in  BC(\R)\times BC(\R)_{[l,+\infty[}$, the function $T(f,g)$ is  differentiable and satisfies:
\begin{eqnarray}\label{fixe}
T(f,g)'(t) &=&-g(t)T(f,g)(t)+f(t), \hspace{2mm} \forall t\in \R.
\end{eqnarray}
%The following proposition is a well-known result but we give its proof for completeness.
%\begin{proposition} Let $f\in BC(\R)$ and $g\in BC(\R)_{[l,+\infty[}$. Then, $T(f,g)(t)$ exists for each $t\in \R$. Moreover,  For every $(f,g)\in  BC(\R)\times BC(\R)_{[l,+\infty[}$, $T(f,g)$ is differentiable satisfying,
%\begin{eqnarray}\label{fixe}
%T(f,g)'(t) &=&-g(t)T(f,g)(t)+f(t), \hspace{2mm} \forall t\in \R,
%\end{eqnarray}
%and is Lipschitz on $\R$ with a constant of Lipschitz less than $\|f\|_{\infty}(\frac{\|g\|_{\infty}}{l}+1)$.
%\end{proposition}
%\begin{proof}  By writen $f=f^+-f^-$, where $f^+:=\max(f,0)$ and $f^-:=\max(-f,0)$, we can assume without loss of generality that $f\geq 0$. Let us set 
%$$U_n(t):=\int_{-n}^t e^{-\int_s^t g(u)du} f(s) ds, \forall n\in \N.$$
%Clearly, for each fixed $t\in \R$, the sequence $(U_n(t))$ is increasing, since $f\geq 0$. Moreover, $(U_n)$ is  bounded from above indeed,
%\begin{eqnarray*}
%U_n(t)\leq \|f\|_{\infty}\int_{-n}^t e^{-\int_s^t g(u)du}\leq \|f\|_{\infty}\int_{-n}^t e^{-\int_s^t ldu}\leq \frac{\|f\|_{\infty}}{l}.
%\end{eqnarray*}
%Thus, $(U_n(t))$ converges and so we deduce that  $\lim_{A\to +\infty} \int_{-A}^t e^{-\int_s^t g(u)du} f(s) ds$ exists.
%\end{proof}
%%%%%%%
\vskip5mm

We consider  the following conditions $(H_0)$, $(H_1)$ and $(\tilde{H}_1)$:

$(H_0)$ the subspace $X$ is invariant under $T$ in the sens that for each $l>0$,  $T(X \times X_{[l,+\infty[})\subset X$.

The property $(H_0)$ is satifed by several classical Banach subspaces of $BC(\R)$, see examples in Section \ref{Sexemple}.

$(H_1)$ The functions  $F, G :(X,\|\cdot\|_{\infty}) \to (X,\|\cdot\|_{\infty})$ are continuous and satisfies: 

$\bullet$ $\inf_{x\in X, t\in \R}G(x,t) >0$ and  there exists $k, M\in \R$ such that $F$ and $G$ are bounded on $B_{[k,M]}$ and that $k\leq \frac{F(x,t)}{G(x,t)}\leq M$ for all $(x,t)\in B_{[k,M]}\times \R$.

$\bullet$ for every sequence $(x_n)\subset X$, if $(x_n)$ converges on each compact of $\R$ in $BC(\R)$, then $(F(x_n))$ and $(G(x_n))$ are relatively compact in $(X,\|\cdot\|_{\infty})$.
\vskip5mm
 Notice that if we assume that $F$ and $G$ are bounded on the whole space $X$ and $\inf_{x\in X, t\in \R}G(x,t) >0,$ then we can take  in the hypothesis $(H_1)$
$$k:=  \inf_{(x,t)\in X\times \R} \frac{F(x,t)}{G(x,t)} \textnormal{ and } M:= \sup_{(x,t)\in X\times \R} \frac{F(x,t)}{G(x,t)}.$$

Notice also that the second point of $(H_1)$ is crucial and ensures that the Schauder fixed point theorem applies. This condition is automaticaly satisfied in the subspace $X=C_w(\R)$ of all $w$-periodic continuous functions since in this case the uniform convergence on each compact of $\R$ is equivalent to the uniform convergence on $\R$. It is also satisfied on other spaces, in general and various situations (see Section \ref{Sexemple} for some examples) and has already been used in the literature (see for instance the condition  $(H_3)$  in \cite{SLGX} and the condition $(E5)$, page 248 in \cite{Dt}).

 %We assume that $F, G:  PAP(\R) \to PAP(\R)^+$ are bounded and positive and  that $G^-:=\inf_{x\in PAP(\R), t\in \R} G(x,t)>0$.
$(\tilde{H}_1)$ The functions  $F, G :(X,\|\cdot\|_{\infty}) \to (X,\|\cdot\|_{\infty})$ are Lipschitz, $$l:=\inf_{x\in X, t\in \R}G(x,t) >0,$$
and there exists $k, M\in \R$ such that, $k\leq \frac{F(x,t)}{G(x,t)}\leq M$ for all $(x,t)\in B_{[k,M]}\times \R$, $$r:=\sup_{x\in B_{[k,M]}}\|F(x)\|_{\infty}<+\infty$$
 and  $$\max(\frac{r}{l^2}, \frac{1}{l})(L_F+L_G)<1,$$
where $L_F$ and $L_G$ denotes the constant of Lipschitz of $F$ and $G$ respectively.
\vskip5mm
 The hypothesis $(H_1)$ and $(\tilde{H}_1)$ are easily satisfied by several examples (see the examples in Section \ref{Sexemple}).

Notice  that the set $B_{[k,M]}$ is a closed convex subset of $X$.

\begin{theorem} \label{existence} Under the hypothesis $(H_0)$ and $(H_1)$ (resp. $(H_0)$ and $(\tilde{H}_1)$), the equation $(E)$  has at least one  solution $x^*$ in $B_{[k,M]}$  (resp. has a unique solution $x^*$ in $B_{[k,M]}$), where $k$ and $M$ are given by the hypothesis $(H_1)$.
\end{theorem}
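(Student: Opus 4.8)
The plan is to recast $(E)$ as a fixed point equation for the operator $\Phi:=T\circ(F,G)$, namely $\Phi(x):=T(F(x),G(x))$, and then to invoke Schauder's theorem under $(H_1)$ and Banach--Picard under $(\tilde H_1)$. First I would check that $\Phi$ is well defined from $X$ to $X$: for $x\in X$ we have $F(x)\in X\subset BC(\R)$, and since $l_0:=\inf_{y\in X,\,t\in\R}G(y,t)>0$, the function $G(x)$ lies in $X_{[l_0,+\infty[}\subset BC(\R)_{[l_0,+\infty[}$, so $(F(x),G(x))$ is in the domain of $T$ and $\Phi(x)\in X$ by $(H_0)$; moreover $\Phi$ is continuous on $X$ by the continuity of $F,G$ and of $T$ (Lemma \ref{Contint}). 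By \eqref{fixe}, $\Phi(x)$ is differentiable with $\Phi(x)'(t)=-G(x,t)\Phi(x)(t)+F(x,t)$, so every fixed point of $\Phi$ solves $(E)$; conversely, if $x\in B_{[k,M]}$ solves $(E)$, then $x$ and $\Phi(x)$ both solve the linear equation $y'=-G(x,\cdot)\,y+F(x,\cdot)$, so their difference $z$ satisfies $z'=-G(x,\cdot)z$, and since $z$ is bounded while $e^{\int_t^0 G(x,u)\,du}\ge e^{-l_0 t}\to+\infty$ as $t\to-\infty$, we get $z\equiv 0$, i.e. $x=\Phi(x)$. Thus solving $(E)$ in $B_{[k,M]}$ is equivalent to finding a fixed point of $\Phi$ in $B_{[k,M]}$. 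Next I would show $\Phi(B_{[k,M]})\subset B_{[k,M]}$: for $x\in B_{[k,M]}$ the inequalities $k\le F(x,s)/G(x,s)\le M$ and $G(x,s)>0$ give $kG(x,s)\le F(x,s)\le MG(x,s)$, and multiplying by the positive kernel $e^{-\int_s^t G(x,u)du}$, integrating over $(-\infty,t]$ and using $\int_{-\infty}^t G(x,s)\,e^{-\int_s^t G(x,u)du}\,ds=1$ (valid since $\frac{d}{ds}e^{-\int_s^t G(x,u)du}=G(x,s)e^{-\int_s^t G(x,u)du}$ and $\int_s^t G(x,u)du\ge l_0(t-s)\to+\infty$ as $s\to-\infty$) yields $k\le\Phi(x)(t)\le M$ for all $t$, so $\Phi(x)\in B_{[k,M]}$.

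For the existence statement (Schauder), the delicate point is relative compactness, and for that I would not work on $B_{[k,M]}$ itself but on the smaller equi-Lipschitz set $C_{*}:=\{x\in X:\ x(t)\in[k,M]\ \forall t,\ \mathrm{Lip}(x)\le C_0\}$, where $C_0:=\max(|k|,|M|)\,\sup_{x\in B_{[k,M]}}\|G(x)\|_\infty+\sup_{x\in B_{[k,M]}}\|F(x)\|_\infty$ is finite by $(H_1)$. This $C_{*}$ is closed, convex and nonempty, and since $\|\Phi(x)'\|_\infty\le\|G(x)\|_\infty\|\Phi(x)\|_\infty+\|F(x)\|_\infty\le C_0$ on $B_{[k,M]}$, we have $\Phi(B_{[k,M]})\subset C_{*}$, hence $\Phi(C_{*})\subset C_{*}$. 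To see that $\overline{\Phi(C_{*})}$ is compact in $(X,\|\cdot\|_\infty)$, take a sequence $(x_n)\subset C_{*}$; being uniformly bounded and equi-Lipschitz it has, by Arzel\`a--Ascoli, a subsequence converging uniformly on every compact of $\R$, so by the second bullet of $(H_1)$ the sequences $(F(x_{n_k}))$ and $(G(x_{n_k}))$ are relatively compact in $(X,\|\cdot\|_\infty)$; passing to a further subsequence, $F(x_{n_k})\to f$ and $G(x_{n_k})\to g$ in $\|\cdot\|_\infty$ with $g\ge l_0$, whence $\Phi(x_{n_k})=T(F(x_{n_k}),G(x_{n_k}))\to T(f,g)$ in $\|\cdot\|_\infty$ by Lemma \ref{Contint}. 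Schauder's theorem applied to the continuous self-map $\Phi$ of $C_{*}$ then gives $x^{*}\in C_{*}\subset B_{[k,M]}$ with $x^{*}=\Phi(x^{*})$, i.e. a solution of $(E)$ in $B_{[k,M]}$.

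For the uniqueness statement (Banach--Picard) I would instead prove that $\Phi$ is a contraction of the complete metric space $B_{[k,M]}$. The core estimate is: for $g_1,g_2\ge l$, $\ \|T(f_1,g_1)-T(f_2,g_2)\|_\infty\le\frac1l\|f_1-f_2\|_\infty+\frac{\|f_2\|_\infty}{l^2}\|g_1-g_2\|_\infty$, obtained by splitting the difference into its two one-variable parts and using $|e^{-A}-e^{-B}|\le e^{-\min(A,B)}|A-B|$ together with $\int_0^\infty e^{-l\sigma}d\sigma=1/l$ and $\int_0^\infty\sigma e^{-l\sigma}d\sigma=1/l^2$. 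Applying this with $f_i=F(x_i),\,g_i=G(x_i)$ for $x_1,x_2\in B_{[k,M]}$, and using $\|F(x_2)\|_\infty\le r$ together with the Lipschitz bounds $L_F,L_G$, gives $\|\Phi(x_1)-\Phi(x_2)\|_\infty\le\big(\frac{L_F}{l}+\frac{rL_G}{l^2}\big)\|x_1-x_2\|_\infty\le\max\!\big(\frac{r}{l^2},\frac1l\big)(L_F+L_G)\,\|x_1-x_2\|_\infty$, and the last constant is $<1$ by $(\tilde H_1)$. Banach--Picard then yields a unique fixed point of $\Phi$ in $B_{[k,M]}$, which by the equivalence established in the first paragraph is the unique solution of $(E)$ in $B_{[k,M]}$.

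The step I expect to be the main obstacle is the relative compactness needed for Schauder: a generic sequence in $B_{[k,M]}$ need not contain a subsequence converging on compacts, so one must first isolate an equicontinuous $\Phi$-invariant convex set (here $C_{*}$, produced by the uniform bound on $\Phi(x)'$) before the second condition of $(H_1)$ can be brought to bear; the remaining ingredients — the two exponential estimates above and the pointwise bounds on $\Phi(x)$ — are routine.
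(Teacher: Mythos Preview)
Your proof is correct and follows the same strategy as the paper's: both reduce $(E)$ to a fixed-point problem for $\Gamma=T\circ(F,G)$, establish $\Gamma(B_{[k,M]})\subset B_{[k,M]}$ via the identity $\int_{-\infty}^t G(x,s)\,e^{-\int_s^t G(x,u)\,du}\,ds=1$, and then apply Schauder (equicontinuity of the image, Arzel\`a--Ascoli plus a diagonal argument, and the second bullet of $(H_1)$) or Banach--Picard (the Lipschitz estimate on $T$). The only differences are cosmetic: for the Schauder step the paper takes as its invariant set $K=\overline{\mathrm{co}}^{\|\cdot\|_\infty}\big(\Gamma(B_{[k,M]})\big)$ rather than your equi-Lipschitz set $C_*$ (both are closed, convex, equicontinuous, $\Gamma$-invariant subsets of $B_{[k,M]}$), and you add the explicit verification that every bounded solution of $(E)$ is a fixed point of $\Gamma$, which the paper needs for its uniqueness claim but does not spell out.
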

The proof of the above theorem will be given in the following section. 
\begin{remark} If moreover the function $F$  is assumed to be a positive function, then we have that $k\geq 0$ and in this cases there exists at last one positive solution $x^*\in X$.
\end{remark}
\begin{remark} \label{var}Theorem \ref{existence} is also true under the same hypothesis for the following equation (replacing $G$ by $-G$):
$$(E) \hspace{3mm} x'(t) -G(x,t) x(t) = F(x,t), t\in \R.$$
To see this, just follow the same proof of Theorem \ref{existence} using  the operator $\tilde{T}$ instead of $T$, where :
\begin{eqnarray*}
\widetilde{T}: BC(\R) \times BC(\R)_{[l,+\infty[}&\to& BC(\R)\\
(f,g) &\mapsto& [t\mapsto \int_t^{+\infty} e^{\int_s^t g(u)du} f(s) ds],
\end{eqnarray*}
and the operator $\widetilde{\Gamma}(x):=\widetilde{T}(G(x),-F(x))$ for all $x\in X$, instead of $\Gamma$ in Lemma \ref{PAP}.
\end{remark}

\subsection{The proof of Theorem \ref{existence}}
In order to prove Theorem \ref{existence} we need some intermediate results, the proof will be given at the end of this section. Let us start with the following general lemmas.
%%%%%%%%%%%%%%
\begin{lemma} \label{intG} Let $g \in BC(\R)$ such that  $\inf_{t\in \R} g(t)>0$. Then, we have that 
\begin{eqnarray*}
\int_{-\infty}^t  g(s)e^{- \int_s^t g(u) du}ds =1.
%\int_t^{+\infty}  a^-G(x(s),s)e^{\int_s^t a^-G(x(u),u) du}ds &=& \int_t^{+\infty}  a^+G(x(s),s)e^{\int_s^t a^+G(x(u),u) du}ds=1.
\end{eqnarray*}
\end{lemma}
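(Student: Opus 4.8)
The plan is to recognize the integrand as an exact derivative. Fix $t\in\R$ and define $\varphi(s) := e^{-\int_s^t g(u)\,du}$ for $s\le t$. Since $g$ is continuous, $\varphi$ is differentiable in $s$, and the fundamental theorem of calculus gives $\frac{d}{ds}\left(-\int_s^t g(u)\,du\right) = g(s)$, hence
\begin{eqnarray*}
\varphi'(s) = g(s)\, e^{-\int_s^t g(u)\,du}.
\end{eqnarray*}
So the integrand $g(s)e^{-\int_s^t g(u)\,du}$ is precisely $\varphi'(s)$, and for any $a<t$ we have $\int_a^t g(s)e^{-\int_s^t g(u)\,du}\,ds = \varphi(t)-\varphi(a) = 1 - e^{-\int_a^t g(u)\,du}$.

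It remains to pass to the limit $a\to-\infty$ and check that the improper integral converges with value $1$, i.e. that $e^{-\int_a^t g(u)\,du}\to 0$. This is where the hypothesis $\inf_{t\in\R} g(t) > 0$ is used: writing $l := \inf_{u\in\R} g(u) > 0$, we get $\int_a^t g(u)\,du \ge l(t-a) \to +\infty$ as $a\to-\infty$, so $e^{-\int_a^t g(u)\,du} \le e^{-l(t-a)} \to 0$. Therefore
\begin{eqnarray*}
\int_{-\infty}^t g(s)e^{-\int_s^t g(u)\,du}\,ds = \lim_{a\to-\infty}\left(1 - e^{-\int_a^t g(u)\,du}\right) = 1.
\end{eqnarray*}

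There is no real obstacle here; the only point requiring a word of care is the justification that the improper integral is well defined, which follows from the same exponential bound $g(s)e^{-\int_s^t g(u)\,du} \le \|g\|_\infty e^{-l(t-s)}$ showing the integrand is absolutely integrable on $(-\infty,t]$. One could alternatively substitute $v = \int_s^t g(u)\,du$ (a valid change of variable since $s\mapsto \int_s^t g(u)\,du$ is a $C^1$ decreasing bijection from $(-\infty,t]$ onto $[0,+\infty)$ with $dv = -g(s)\,ds$), turning the integral into $\int_0^{+\infty} e^{-v}\,dv = 1$; I would present whichever of the two is shortest, but the antiderivative argument above is the cleanest.
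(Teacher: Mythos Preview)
Your proof is correct and follows essentially the same approach as the paper: both recognize $s\mapsto e^{-\int_s^t g(u)\,du}$ as an antiderivative of the integrand, evaluate at the endpoints, and use $\inf g>0$ to conclude that $\int_{-\infty}^t g(u)\,du=+\infty$ so that the lower endpoint contribution vanishes. Your write-up is slightly more detailed (explicit justification of absolute integrability and the alternative change-of-variables remark), but the underlying argument is identical.
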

\begin{proof} Since $\inf_{t\in \R} g(t) >0$, then clearly we have that $$\int_{-\infty}^t g(u) du=+\infty.$$ It follows that
\begin{eqnarray*}
\int_{-\infty}^t  g(s)e^{- \int_s^t g(u) du}ds&=& \int_{-\infty}^t  g(s)e^{ \int_t^s g(u) du}ds\\
                                                                         &=& [e^{ \int_t^s g(u) du}]_{-\infty}^t\\
                                                                                                        &=& 1- e^{- \int_{-\infty}^t g(u) du}\\
                                                                                                        &=& 1.
\end{eqnarray*}

\end{proof}
%%%%%%%%%%%%%%
\begin{lemma} \label{Contint}  Let $l,r,r'>0$ be a strictly positive real numbers. Then, the following assertions hold. 

$(i)$ For every $(f,g)\in  BC(\R)\times BC(\R)_{[l,+\infty[}$, the function $T(f,g)$ is  Lipschitz on $\R$ with a constant of Lipschitz less than $\|f\|_{\infty}(\frac{\|g\|_{\infty}}{l}+1)$. Consequently, the familly $\mathcal{F}:=\lbrace T(f,g): (f,g)\in  B_{BC(\R)}(0,r) \times  BC(\R)_{[l,r']} \rbrace$ is uniformly equi-continuous on $\R$.

$(ii)$ The operator $T$ is Lipschitz on  $B_{BC(\R)}(0,r)\times BC(\R)_{[l,+\infty[}$, that is, for every $(f_1,g_1),(f_2,g_2)\in B_{BC(\R)}(0,r)\times BC(\R)_{[l,+\infty[}$, we have that
$$\|T(f_1,g_1) - T(f_2,g_2)\|_{\infty} \leq \max(\frac{r}{l^2},\frac{1}{l})(\|g_1-g_2\|_{\infty}+\|f_1-f_2\|_{\infty}).$$
\end{lemma}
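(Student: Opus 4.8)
The plan is to prove the two assertions separately, working directly from the explicit integral formula defining $T$. For $(i)$, fix $(f,g)\in BC(\R)\times BC(\R)_{[l,+\infty[}$ and recall from \eqref{fixe} that $T(f,g)'(t) = -g(t)T(f,g)(t) + f(t)$, so to bound the Lipschitz constant it suffices to bound $\|T(f,g)\|_\infty$ and then $\|T(f,g)'\|_\infty$. For the sup-norm bound I would estimate
$$|T(f,g)(t)| \leq \int_{-\infty}^t e^{-\int_s^t g(u)\,du}|f(s)|\,ds \leq \|f\|_\infty \int_{-\infty}^t e^{-l(t-s)}\,ds = \frac{\|f\|_\infty}{l},$$
using $g(u)\geq l$ so that $\int_s^t g(u)\,du \geq l(t-s)$. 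Then from \eqref{fixe}, $|T(f,g)'(t)| \leq \|g\|_\infty \cdot \frac{\|f\|_\infty}{l} + \|f\|_\infty = \|f\|_\infty\big(\frac{\|g\|_\infty}{l}+1\big)$, which gives the stated Lipschitz constant by the mean value theorem. The equicontinuity of $\mathcal{F}$ over the family with $\|f\|_\infty \leq r$ and $l \leq g \leq r'$ is then immediate, since every member has Lipschitz constant at most $r(\frac{r'}{l}+1)$, a uniform bound.

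For $(ii)$, the approach is to write the difference $T(f_1,g_1)(t) - T(f_2,g_2)(t)$ and split it into a piece controlled by $\|f_1 - f_2\|_\infty$ and a piece controlled by $\|g_1 - g_2\|_\infty$. First, adding and subtracting $\int_{-\infty}^t e^{-\int_s^t g_1(u)\,du} f_2(s)\,ds$:
$$T(f_1,g_1)(t) - T(f_2,g_2)(t) = \int_{-\infty}^t e^{-\int_s^t g_1(u)\,du}\big(f_1(s)-f_2(s)\big)\,ds + \int_{-\infty}^t \Big(e^{-\int_s^t g_1(u)\,du} - e^{-\int_s^t g_2(u)\,du}\Big) f_2(s)\,ds.$$
The first integral is bounded by $\|f_1-f_2\|_\infty \cdot \frac{1}{l}$ exactly as in part $(i)$. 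For the second, I would use the elementary inequality $|e^{-a} - e^{-b}| \leq |a-b|$ valid for $a,b \geq 0$ (both exponents are nonnegative since $g_i \geq l > 0$), giving $\big|e^{-\int_s^t g_1(u)\,du} - e^{-\int_s^t g_2(u)\,du}\big| \leq \int_s^t |g_1(u)-g_2(u)|\,du \leq (t-s)\|g_1-g_2\|_\infty$; however this bound alone does not decay, so instead I would combine it with the stronger pointwise bound $\big|e^{-\int_s^t g_1} - e^{-\int_s^t g_2}\big| \leq e^{-l(t-s)}\big|e^{-\int_s^t(g_1-g_2)} - 1\big| \leq e^{-l(t-s)}(t-s)\|g_1-g_2\|_\infty$ (using $|e^{-x}-1|\le |x|$ and bounding one of the two exponentials by $e^{-l(t-s)}$). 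Then $\|f_2\|_\infty \leq r$ and $\int_{-\infty}^t (t-s) e^{-l(t-s)}\,ds = \int_0^\infty \tau e^{-l\tau}\,d\tau = 1/l^2$ yield a bound of $\frac{r}{l^2}\|g_1-g_2\|_\infty$ on the second integral. Adding the two pieces and bounding each coefficient by $\max(\frac{r}{l^2},\frac{1}{l})$ gives the claim.

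The main technical point—really the only delicate step—is the estimate on the exponential difference in $(ii)$: one must get a factor that decays exponentially in $t-s$ so the integral converges, and the clean way is to factor out $e^{-l(t-s)}$ from one exponential before applying $|e^{-x}-1|\le x$ to what remains. Everything else is a routine application of the bound $g \geq l$ and Fubini/elementary integration; I would present the two parts in the order $(i)$ then $(ii)$ since the sup-norm estimate from $(i)$ is reused in $(ii)$.
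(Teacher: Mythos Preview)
Your overall strategy matches the paper's almost exactly: part $(i)$ is identical (bound $\|T(f,g)\|_\infty\le \|f\|_\infty/l$, plug into \eqref{fixe}, apply the mean value theorem), and in part $(ii)$ both you and the paper split the difference into an $f$-piece and a $g$-piece and reduce everything to the estimate
\[
\bigl|e^{-\int_s^t g_1}-e^{-\int_s^t g_2}\bigr|\le (t-s)\,e^{-l(t-s)}\,\|g_1-g_2\|_\infty.
\]

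However, your justification of this estimate has a gap. You write
\[
\bigl|e^{-\int_s^t g_1}-e^{-\int_s^t g_2}\bigr|\le e^{-l(t-s)}\bigl|e^{-\int_s^t(g_1-g_2)}-1\bigr|\le e^{-l(t-s)}(t-s)\|g_1-g_2\|_\infty,
\]
invoking ``$|e^{-x}-1|\le |x|$''. The first inequality is fine (factor out $e^{-\int_s^t g_2}\le e^{-l(t-s)}$), but the second fails: $|e^{-x}-1|\le |x|$ holds only for $x\ge 0$, whereas $\int_s^t(g_1-g_2)$ can have either sign, and for $x<0$ one has $|e^{-x}-1|=e^{|x|}-1\ge |x|$. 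So this step, as written, is invalid.

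The target inequality is nonetheless true, and the paper obtains it cleanly by a different route: since $x\mapsto e^{x}$ is $e^{b}$-Lipschitz on $(-\infty,b]$ (mean value theorem) and both $-\int_s^t g_i\le -l(t-s)$, one gets directly
\[
\bigl|e^{-\int_s^t g_1}-e^{-\int_s^t g_2}\bigr|\le e^{-l(t-s)}\Bigl|\int_s^t g_1-\int_s^t g_2\Bigr|\le (t-s)\,e^{-l(t-s)}\,\|g_1-g_2\|_\infty.
\]
Alternatively, you could repair your argument by factoring out $e^{-\min(\int_s^t g_1,\int_s^t g_2)}$ and using $1-e^{-|x|}\le |x|$, which \emph{is} valid. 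Either fix closes the gap; the rest of your proof then goes through exactly as in the paper.
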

%%%
\begin{proof} $(i)$ Let us prove that $T(f,g)$ is Lipschitz on $\R$. First, it is easy to see that since $\inf_{t\in \R} g(t) \geq l$, then
$$\|T(f,g)\|_{\infty}\leq \frac{\|f\|_{\infty}}{l}.$$
On the other hand, from the formula $(\ref{fixe})$, we have that for every $(f,g)\in BC(\R) \times BC(\R)_{[l,+\infty[}$:
\begin{eqnarray*}
T(f,g)'(t) &=&-g(t)T(f,g)(t)+f(t), \hspace{2mm} \forall t\in \R.  
\end{eqnarray*}
Thus, we have that $$\|T(f,g)'\|_{\infty}\leq \|g\|_{\infty}\|T(f,g)\|_{\infty}+\|f\|_{\infty}\leq \|f\|_{\infty}(\frac{\|g\|_{\infty}}{l}+1).$$
Hence, by the mean value theorem, $T(f,g)$ is  Lipschitz with a constant of Lipschitz less that $\|f\|_{\infty}(\frac{\|g\|_{\infty}}{l}+1)$. It follows that the familly $\mathcal{F}:=\lbrace T(f,g): (f,g)\in  B_{BC(\R)}(0,r) \times  BC(\R)_{[l,r']} \rbrace$ is unifomly equi-continuous on $\R$.
%let us assume without loss of generality that $t\geq t'$, we have that 
%\begin{eqnarray*}
%|\int_{-\infty}^t e^{-\int_s^t g(u)du} f(s) ds-\int_{-\infty}^{t'} e^{-\int_s^{t'} g(u)du} f(s) ds|&\leq&\\
%\int_{-\infty}^{t'} |e^{-\int_s^t g(u)du}-e^{-\int_s^{t'} g(u)du}| |f(s)| ds+\int_{t'}^{t} e^{-\int_s^t g(u)du} |f(s)| ds&\leq&\\
%\left( \int_{-\infty}^{t'} e^{-\int_s^{t'} g(u)du}|e^{-\int_{t'}^t g(u)du}-1|  ds +\int_{t'}^{t} e^{-\int_s^t g(u)du} ds\right ) \|f\|_{\infty}&\leq&\\
%\|f\|_{\infty}[|e^{-\int_{t'}^t g(u)du}-1| \int_{-\infty}^{t'} e^{-\int_s^{t'} ldu} ds+\int_{t'}^{t} e^{-\int_s^t l du} ds]&\leq&\\
%\|f\|_{\infty}(\frac{1}{l}[1-e^{-\int_{t'}^t g(u)du}) +\frac{1}{l}(1-e^{l(t'-t)})]&\leq&\\
%\frac{\|f\|_{\infty}}{l}[(1-e^{-\|g\|_{\infty}(t-t')}) +(1-e^{-l(t-t')}))]&\leq&\\
%\frac{\|f\|_{\infty}}{l}[\|g\|_{\infty}(t-t') +l(t-t')]&=&\\
%\frac{\|f\|_{\infty}}{l}(\|g\|_{\infty} +1)(t-t').
%\end{eqnarray*}
%Hence, for every $t,t'$ we have that
%\begin{eqnarray*}
%|T(f,g)(t)-T(f,g)(t')|&\leq&\frac{\|f\|_{\infty}}{l}(\|g\|_{\infty} +1)|t-t'|.
%\end{eqnarray*}

%$(ii)$ Let $f_1, f_2\in BC(\R)$,  we have that, for all $t\in \R$,
%\begin{eqnarray*}
 %|\int_{-\infty}^t e^{-\int_s^t g(u)du} f_1(s) ds-\int_{-\infty}^t e^{-\int_s^t g(u)du} f_2(s) ds|&\leq&  \int_{-\infty}^t e^{-\int_s^t g(u)du} |f_1(s) -  f_2(s)| ds\\
%&\leq&  \frac{\|f_1-f_2\|_{\infty}}{g^-} \int_{-\infty}^t g^- e^{-\int_s^t g(u)du} ds\\
%&=&  \frac{\|f_1-f_2\|_{\infty}}{g^-},
%\end{eqnarray*}
\vskip5mm
$(ii)$ First, recall that the function $x\mapsto e^x$ is $e^b$-Lipschitz on any intervalle $]-\infty,b]$, by the mean value theorem. Let $g_1, g_2\in BC(\R)_{[l,+\infty[}$.  Since $g_1, g_2\geq l$, we have that $ -\int_s^t g_1(u)du\leq -l(t-s)$ and $  -\int_s^t g_2(u)du\leq -l(t-s)$, for every $s\leq t$. It follows from the fact that $x\mapsto e^x$ is $e^{-l(t-s)}$-Lipschitz on the intervalle $]-\infty,-l(t-s)]$ that,
\begin{eqnarray*} \label{exp-lip}
|e^{-\int_s^t g_1(u)du}-e^{-\int_s^t g_2(u)du}|&\leq& e^{-l(t-s)}|\int_s^t g_1(u)du - \int_s^t g_2(u)du|.
\end{eqnarray*}
Thus, for every $t, s\in \R$ such that $s\leq t$, we have that
\begin{eqnarray*}
|e^{-\int_s^t g_1(u)du}-e^{-\int_s^t g_2(u)du}|&\leq& (t-s)e^{-l(t-s)}\|g_1-g_2\|_{\infty}.
\end{eqnarray*}
Now, using the above inequality we have that
\begin{eqnarray*}
|\int_{-\infty}^t e^{-\int_s^t g_1(u)du}f_1(s) ds-\int_{-\infty}^t e^{-\int_s^t g_2(u)du}f_2(s) ds|&\leq& \int_{-\infty}^t |e^{-\int_s^t g_1(u)du}-e^{-\int_s^t g_2(u)du} ||f_1(s)|ds\\
                                                                                                                                                        && + \int_{-\infty}^t e^{-\int_s^t g_2(u)du}|f_1(s)-f_2(s)|ds\\
                                                                            &\leq& r\|g_1-g_2\|_{\infty} \int_{-\infty}^t (t-s)e^{-l(t-s)}ds \\
                                                                            && +\|f_1-f_2\|_{\infty}\int_{-\infty}^t e^{-l(t-s)}ds\\
                                                                              &=& \frac{r}{l^2}\|g_1-g_2\|_{\infty}+\frac{1}{l}\|f_1-f_2\|_{\infty}.
\end{eqnarray*}
Thus, 
$$\|T(f_1,g_1)- T(f_1,g_1)\|_{\infty} \leq \max(\frac{r}{l^2}, \frac{1}{l}) (\|f_1-f_2\|_{\infty}+\|g_1-g_2\|_{\infty}).$$
This ends the proof of $(ii)$.
\end{proof} 

%%%%%%%%%%%%%
%%%%%%%%%%%%%

%%%%%%%%%%%%
%%%%%%%%%%%%%%

\begin{lemma} \label{PAP} Under the hypothesis $(H_0)$ and $(H_1)$ (resp. the hypothesis $(H_0)$ and $(\tilde{H}_1)$), the operators $\Gamma$ defined for all $x\in B_{[k,M]}\subset X$ by 
\begin{eqnarray*}
\Gamma(x):=T(F(x),G(x))=[t\mapsto \int_{-\infty}^t  e^{-\int_s^t G(x,u)du} F(x,s)ds],
\end{eqnarray*}
satisfies the following assertions:

$(a)$ $\Gamma$ maps $B_{[k,M]}$ into $B_{[k,M]}$.

$(b)$  $\Gamma$  is   norm-to-norm continuous and satisfies: for every sequence $(x_n)\subset B_{[k,M]}$, if $(x_n)$ converges uniformly on each compact of $\R$ to some point of $BC(\R)$, then $(\Gamma(x_n))$ is relatively compact in $(B_{[k,M]},\|\cdot\|_{\infty})$ (resp. $\Gamma$ is $\|\cdot\|_{\infty}$-to-$\|\cdot\|_{\infty}$ contractant).

$(c)$ $\Gamma(B_{[k,M]})$ is equi-continuous at each point of $\R$. Moreover, the  set $\Gamma(B_{[k,M]})(t):=\lbrace \Gamma (x)(t) : x\in B_{[k,M]} \rbrace \subset [k,M]$ is relatively compact in $\R$.
\end{lemma}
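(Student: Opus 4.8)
The plan is to verify the three assertions (a), (b), (c) of Lemma \ref{PAP} by combining the general estimates for the operator $T$ from Lemma \ref{Contint}, the identity from Lemma \ref{intG}, and the structural hypotheses $(H_0)$, $(H_1)$ (resp. $(\tilde H_1)$).

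\medskip

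For (a), fix $x\in B_{[k,M]}$. By $(H_0)$ the function $\Gamma(x)=T(F(x),G(x))$ lies in $X$, since $F(x)\in X$ and $G(x)\in X_{[l,+\infty[}$ with $l:=\inf_{y\in X,t\in\R}G(y,t)>0$. It remains to check that $\Gamma(x)(t)\in[k,M]$ for every $t$. Using $(H_1)$, for $(x,s)\in B_{[k,M]}\times\R$ we have $k\,G(x,s)\le F(x,s)\le M\,G(x,s)$; multiplying by the positive kernel $e^{-\int_s^t G(x,u)\,du}$ and integrating in $s$ over $(-\infty,t]$, Lemma \ref{intG} applied to $g=G(x,\cdot)$ (which is in $BC(\R)$ with $\inf_t g(t)\ge l>0$) gives $\int_{-\infty}^t G(x,s)e^{-\int_s^t G(x,u)du}\,ds=1$, hence $k\le\Gamma(x)(t)\le M$. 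So $\Gamma(x)\in B_{[k,M]}$.

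\medskip

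For (b), under $(H_1)$: norm-to-norm continuity follows by writing $\Gamma=T\circ(F,G)$, where $(F,G):(X,\|\cdot\|_\infty)\to (X,\|\cdot\|_\infty)^2$ is continuous by $(H_1)$ and $T$ is Lipschitz on the relevant set by Lemma \ref{Contint}(ii) (note that on $B_{[k,M]}$ the functions $F(x)$ and $G(x)$ stay in a fixed ball $B_{BC(\R)}(0,r')$ and in $BC(\R)_{[l,+\infty[}$, so the Lipschitz estimate of (ii) applies with a uniform constant). For the compactness statement, let $(x_n)\subset B_{[k,M]}$ converge uniformly on compacta. By the second bullet of $(H_1)$, $(F(x_n))$ and $(G(x_n))$ are relatively compact in $(X,\|\cdot\|_\infty)$; passing to a subsequence along which $F(x_n)\to f$ and $G(x_n)\to g$ in sup-norm (note $g\in X_{[l,+\infty[}$ since $g\ge l$), the Lipschitz continuity of $T$ from Lemma \ref{Contint}(ii) yields $\Gamma(x_n)=T(F(x_n),G(x_n))\to T(f,g)$ in sup-norm, and $T(f,g)\in B_{[k,M]}$ by the argument of part (a) applied in the limit. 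Hence every subsequence has a sup-norm convergent sub-subsequence with limit in $B_{[k,M]}$, so $(\Gamma(x_n))$ is relatively compact in $(B_{[k,M]},\|\cdot\|_\infty)$. Under $(\tilde H_1)$ instead: $F,G$ are globally Lipschitz with constants $L_F,L_G$, so for $x_1,x_2\in B_{[k,M]}$, Lemma \ref{Contint}(ii) with the ball radius $r=\sup_{x\in B_{[k,M]}}\|F(x)\|_\infty$ gives
\begin{eqnarray*}
\|\Gamma(x_1)-\Gamma(x_2)\|_\infty &\le& \max\Bigl(\frac{r}{l^2},\frac{1}{l}\Bigr)\bigl(\|G(x_1)-G(x_2)\|_\infty+\|F(x_1)-F(x_2)\|_\infty\bigr)\\
&\le& \max\Bigl(\frac{r}{l^2},\frac{1}{l}\Bigr)(L_F+L_G)\,\|x_1-x_2\|_\infty,
\end{eqnarray*}
and the last hypothesis of $(\tilde H_1)$ makes the factor $<1$, so $\Gamma$ is a contraction.

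\medskip

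For (c), equi-continuity of $\Gamma(B_{[k,M]})$ at each point of $\R$ is immediate from Lemma \ref{Contint}(i): each $T(f,g)$ with $(f,g)\in B_{BC(\R)}(0,r')\times BC(\R)_{[l,r'']}$ is Lipschitz with a constant bounded by $r'(\tfrac{r''}{l}+1)$, where $r'$ bounds $\|F(x)\|_\infty$ and $r''$ bounds $\|G(x)\|_\infty$ over $B_{[k,M]}$ (these are finite because $F$ and $G$ are bounded on $B_{[k,M]}$ by $(H_1)$, resp. Lipschitz hence bounded on the bounded set $B_{[k,M]}$ under $(\tilde H_1)$); thus $\Gamma(B_{[k,M]})$ is uniformly equi-Lipschitz, in particular equi-continuous everywhere. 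The inclusion $\Gamma(B_{[k,M]})(t)\subset[k,M]$ was shown in part (a), and $[k,M]$ is a compact subset of $\R$, so $\Gamma(B_{[k,M]})(t)$ is relatively compact in $\R$.

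\medskip

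The main obstacle is the compactness assertion in (b) under $(H_1)$: one must be careful that the second bullet of $(H_1)$ only guarantees relative compactness of $(F(x_n))$, $(G(x_n))$ when $(x_n)$ converges on compacta, and then feed this through the continuity of $T$; the key technical point is that $T$ is jointly (Lipschitz) continuous in $(f,g)$ on $B_{BC(\R)}(0,r')\times BC(\R)_{[l,+\infty[}$ — exactly Lemma \ref{Contint}(ii) — so that sup-norm convergence of $(F(x_n),G(x_n))$ transfers to sup-norm convergence of $(\Gamma(x_n))$, and that the limit stays in the closed convex set $B_{[k,M]}$ by the same order computation as in (a). Everything else is a routine assembly of the preceding lemmas.
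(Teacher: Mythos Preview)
Your proof is correct and follows essentially the same route as the paper's: both use Lemma~\ref{intG} for (a), Lemma~\ref{Contint}(ii) for the continuity/contraction estimate in (b), and Lemma~\ref{Contint}(i) for the uniform equi-Lipschitz property in (c). Your handling of the relative-compactness step in (b) is slightly more explicit than the paper's (you extract convergent subsequences of $(F(x_n))$ and $(G(x_n))$ and push them through the Lipschitz map $T$, then check the limit stays in $B_{[k,M]}$), but the argument is the same in substance.
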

\begin{proof} Assume $(H_0)$ and $(H_1)$  and let us set $r:=\sup_{x\in B_{[k,M]}}\|F(x)\|_{\infty}$ and $l:=\inf_{x\in X, t\in \R}G(x,t) >0$. 
Since $T$ satisfies $(H_0)$,  the operator $\Gamma$ maps $B_{[k,M]}$ into $X$ as follows: 
\begin{eqnarray*}
\Gamma: B_{[k,M]} &\to& B_X(0,r)\times X_{[l,+\infty[}\to X\\
                    x &\mapsto& (F(x),G(x)) \mapsto \Gamma(x)=T(F(x),G(x)).
\end{eqnarray*}

$(a)$ Let us prove that $\Gamma$ maps $B_{[k,M]}$ into $B_{[k,M]}$. Indeed,  by assumption we have that
\begin{eqnarray*}
k \leq \frac{F(x,s)}{G(x,s)}\leq M, \forall (x,t)\in B_{[k,M]}\times \R,
\end{eqnarray*}
we get using Lemma \ref{intG} that for every $x\in B_{[k,M]}$ and every $t\in \R$
\begin{eqnarray*}
k &=&\int_{-\infty}^t  k G(x)(s) e^{-\int_s^t G(x,u)du} ds \\
&\leq& \Gamma(x)(t)= \int_{-\infty}^t  e^{-\int_s^t G(x,u)du} F(x,s)ds\\ 
&\leq& \int_{-\infty}^t  M G(x)(s)e^{-\int_s^t G(x,u)du} ds\\
&=&M.
\end{eqnarray*}
Thus, $\Gamma(x)\in B_{[k,M]}$.  

$(b)$ Using part $(ii)$ of lemma \ref{Contint}, we get that for every $x,y\in B_{[k,M]}$, 
\begin{eqnarray}\label{EQG}
\|\Gamma(x)-\Gamma(y)\|_{\infty} &=& \|T(F(x),G(x))-T(F(y),G(y))\|_{\infty}\nonumber\\
&\leq& \max(\frac{r}{l^2}, \frac{1}{l}) (\|F(x)-F(y)\|_{\infty}+\|G(x)-G(y)\|_{\infty}).
\end{eqnarray}
It follows using the hypothesis $(H_1)$, that $\Gamma$ is continuous and that for every sequence $(x_n)\subset B_{[k,M]}$, if $(x_n)$ converges on each compact subset of $\R$ to some point of $BC(\R)$, then $(\Gamma(x_n))$ is relatively compact in $(B_{[k,M]},\|\cdot\|_{\infty})$.

$(c)$  We obtain that $\Gamma(B_{[k,M]}) (\subset  B_{[k,M]})$ is  equi-continuous at each point of $\R$ by using Lemma \ref{Contint} with $r:=\sup_{x\in B_{[k,M]}}\|F(x)\|_{\infty}$, $l:=\inf_{x\in X, t\in \R}G(x,t) >0$ and $r':=\sup_{x\in B_{[k,M]}, t\in \R} G(x,t)>0$. Moreover, it is clear that the  set $\Gamma(B_{[k,M]})(t):=\lbrace \Gamma (x)(t) : x\in B_{[k,M]} \rbrace \subset [k,M]$ is relatively compact in $\R$.

Now, if we assume that the hypothesis $(\tilde{H}_1)$ holds, then $F$ and $G$ are $\|\cdot\|_{\infty}$-to-$\|\cdot\|_{\infty}$ Lipschitz  functions and so using the inequality $(\ref{EQG})$ we get that
$$\|\Gamma(x)-\Gamma(y)\|_{\infty} \leq \max(\frac{r}{l^2}, \frac{1}{l})(L_F+L_G)\|x-y\|_{\infty},$$
which implies that $\Gamma$ is contractant by the assumption $(\tilde{H}_1)$. 

\end{proof}
%%%%%%%%%%%% 
%%%%%%%%%%%%%%%%

%%%%%%%%%%%%%%%%%%%

Now, we give the proof of Theorem \ref{existence}. Let us denote, $\overline{\textnormal{co}}^{\|\cdot\|_{\infty}}(\Gamma(B_{[k,M]}))$  the norm-closed convex hull of $\Gamma(B_{[k,M]})$.

\begin{proof}[Proof of Theorem \ref{existence}] We treat two situations:

$\bullet$ {\it Under the hypothesis $(H_0)$ and $(H_1)$.}  By Lemma \ref{PAP}, $\Gamma(B_{[k,M]})\subset B_{[k,M]}$ and so $$K:=\overline{\textnormal{co}}^{\|\cdot\|_{\infty}}(\Gamma(B_{[k,M]}))\subset B_{[k,M]}.$$ 
Then, $\Gamma(K)\subset \Gamma(B_{[k,M]})\subset K$ and  we have that the operator $\Gamma: K \to K$ is well defined and continuous. We are going to prove that  $\Gamma(K)$ is relatively compact for the norm $\|\cdot\|_{\infty}$.  Using part $(c)$ of Lemma \ref{PAP}, we see that $K$ (as a closed convex hull)  is also equi-continuous at each point of $\R$ and that $K(t):=\lbrace x(t): x\in K \rbrace \subset [k,M]$ is relatively compact in $\R$. Thus, from the Arzela-Ascoli theorem, we have that the restriction of $K$ to any intervalle $[-m,m]$ of $\R$ ($m\in \N$)  is relatively compact  in the space $(C([-m,m]),\|\cdot\|_{\infty})$ of all continuous functions on $[-m,m]$.  Now,  let $(x_n)$ be any sequence of $K$. Then, we have that the restriction of $(x_n)$ to each intervalle $[-m,m]$ has a subsequence $(x_{\mu_m(n)})$ converging uniformly  on this intervalle. Using the Cantor diagonal process, there exists a subsequence $(x_{\mu(n)})$ converging uniformly on each compact subset of $\R$.  Then, by Lemma \ref{PAP}, there exists a subsequence that we will denote again $(x_{\mu(n)})$ such that $(\Gamma(x_{\mu(n)}))$  norm converges in $BC(\R)$. Thus, $\Gamma(K)$ is relatively compact for the norm $\|\cdot\|_{\infty}$. Using the Schauder fixed point theorem we get a fixed point $x^*\in K\subset B_{[k,M]}$ for $\Gamma$, which satisfies the equation $(E)$ by the formula (\ref{fixe}). 

$\bullet$ {\it Under the hypothesis $(H_0)$ and $(\tilde{H}_1)$.}  In this situation the operator $\Gamma$ is contractant by Lemma \ref{PAP}, so the Banach-Picard theorem applies and gives a unique fixed point $x^*\in  B_{[k,M]}$ for $\Gamma$, which is the unique solution of the equation $(E)$ in the set $B_{[k,M]}$ by the formula (\ref{fixe}). 
\end{proof}
\vskip5mm
%We set 
%$$k:=\phi^{-1}(\frac{H^-}{a^+}) \textnormal{ and } M:=\phi^{-1}(\frac{H^+}{a^-}).$$
%Since  $\phi$ is  positive and strictly increasing from  $\R^+$ into $\R^+$, it is bijective and the inverse $\phi^{-1}$ is also strictly increasing. Thus,  $0< k\leq M.$
%%%%%%%%%%%%%%%%%
%%%%%%%%%%%%%%%%%%%
%%%%%%%%%%%%%%%%
%%%%%%%%%%%%%%%%
\subsection{Examples and properties} \label{Sexemple}
In this section, we give examples satisfying our results. The hypothesis $(H_0)$ is satified for several classical subspace of $BC(\R)$. We give in Proposition \ref{algebra} (see bellow) some examples of classical spaces satisfying this property. We need to introduce some definitions.
\vskip5mm
 For a fixed $w\in \R$, we denote $C_w(\R)$ the Banach subspace of $BC(\R)$ consisting on  all continuous $w$-periodic functions. 

\begin{definition} A continuous function $f : \R \to \R$ is called (Bohr) almost periodic if for each
$\varepsilon > 0$, there exists $l_\varepsilon > 0$ such that every interval of length $l_\varepsilon$ contains at least a
number $\tau$ with the following property:
$$\sup_{t\in \R}|f(t+\tau)-f(t)|< \varepsilon.$$
\end{definition}
The number $\tau$ is then called an $\varepsilon$-period of $f$. The collection of all almost periodic functions $f : \R\to \R$ will be denoted by $AP(\R)$. It is known that the space $AP(\R)$ is a Banach subspace of $BC(\R)$ (see for instance \cite{Dt}). Clearly, for every $w\in \R$, we have that $C_w(\R)\hookrightarrow AP(\R)$ (a Banach subspace). A classical example of an almost periodic function which is not periodic is given
by the following function
$$f (t) = sin t +sin\sqrt{2}t.$$
 The space of continuous ergodic functions is defined as follows:
$$PAP_0(\R):=\lbrace g\in BC(\R): \lim_{r\to +\infty} \frac{1}{2r}\int_{-r}^{r} |g(t)| dt=0\rbrace.$$
Clearly, $PAP_0(\R)$ is a Banach subspace of $BC(\R)$. It is easy to see that $AP(\R)\cap PAP_0(\R)=\lbrace 0 \rbrace$ (see for instace \cite{Dt}). Then, we define the Banach subspace of $BC(\R)$ of all pseudo almost periodic function denoted $PAP(\R)$, as follows:
$$PAP(\R):=AP(\R)\oplus PAP_0(\R).$$
Finally, we introduce the following space of all pseudo $w$-periodic functions denoted $PP_w(\R)$ by 
$$PP_w(\R):=C_w(\R)\oplus PAP_0(\R).$$
Clearly, $PP_w(\R)$ is a Banach subspace of $PAP(\R)$ for every $w\in \R$. Finally, $BC_U(\R)$ denotes the Banach subspace of $BC(\R)$ of uniformly continuous functions.

\begin{proposition} \label{algebra} The following classical spaces  $X:=BC(\R)$, $BC_U(\R)$, $C_w(\R)$, $AP(\R)$, $PAP_0(\R)$, $PAP(\R)$ and $PP_w(\R)$, satisfy the hypothesis $(H_0)$.
\end{proposition}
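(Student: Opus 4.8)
Here is the plan.

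\smallskip
I would organise the argument over the seven spaces by increasing difficulty. For $X=BC(\R)$ there is nothing to add: Lemma \ref{Contint}$(i)$ already gives $T(f,g)\in BC(\R)$ (it is even Lipschitz). For $X=BC_U(\R)$, the same Lemma \ref{Contint}$(i)$ says $T(f,g)$ is Lipschitz, hence uniformly continuous, so $T(f,g)\in BC_U(\R)$. For $X=C_w(\R)$ I would change variables $s\mapsto s+w$ in the integral defining $T(f,g)(t+w)$: since $\int_{s+w}^{t+w}g(u)\,du=\int_s^t g(v+w)\,dv=\int_s^t g(v)\,dv$ and $f(s+w)=f(s)$ by $w$-periodicity, one obtains $T(f,g)(t+w)=T(f,g)(t)$, so $T(f,g)\in C_w(\R)$.

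\smallskip
For $X=AP(\R)$ the clean route is Bochner's criterion: $h\in AP(\R)$ if and only if $\lbrace h(\cdot+a):a\in\R\rbrace$ is relatively compact in $(BC(\R),\|\cdot\|_{\infty})$. The same change of variables as above yields $T(f,g)(\cdot+a)=T\big(f(\cdot+a),g(\cdot+a)\big)$ for every $a\in\R$. If $f,g\in AP(\R)$ and $g\geq l$, then by Bochner the set $\lbrace(f(\cdot+a),g(\cdot+a)):a\in\R\rbrace$ is relatively compact in $BC(\R)\times BC(\R)$, and its closure stays inside $B_{BC(\R)}(0,\|f\|_{\infty})\times BC(\R)_{[l,+\infty[}$ (closed balls and the inequality $\geq l$ pass to uniform limits). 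Since $T$ is Lipschitz, hence continuous, on that product by Lemma \ref{Contint}$(ii)$, it carries this set onto the relatively compact set $\lbrace T(f,g)(\cdot+a):a\in\R\rbrace$; therefore $T(f,g)\in AP(\R)$.

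\smallskip
For $X=PAP_0(\R)$ the assertion is vacuous, since a function $x\in PAP_0(\R)$ with $x\geq l>0$ would satisfy $\frac1{2r}\int_{-r}^{r}|x(t)|\,dt\geq l$ for every $r$, so $PAP_0(\R)_{[l,+\infty[}=\emptyset$. For $X=PAP(\R)=AP(\R)\oplus PAP_0(\R)$ and $X=PP_w(\R)=C_w(\R)\oplus PAP_0(\R)$, fix $f,g\in X$ with $g\geq l$ and write $g=g_1+g_0$, $f=f_1+f_0$ with $g_1,f_1$ in $AP(\R)$ (resp. $C_w(\R)$) and $g_0,f_0\in PAP_0(\R)$. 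The crucial preliminary step is that $g_1\geq l$ as well: if $g_1(t_0)<l$, then by continuity $g_1<l-\delta$ on a small interval $I$ around $t_0$, hence $g_1<l-\tfrac{\delta}{2}$ on a set $E$ of positive lower density — obtained by translating $I$ along the relatively dense set of $\tfrac{\delta}{2}$-periods of $g_1$ in the $AP$ case, or along $w\Z$ in the $PP_w$ case — and there $g_0=g-g_1>\tfrac{\delta}{2}$, contradicting $\frac1{2r}\int_{-r}^{r}|g_0|\to0$. Once $g_1\geq l$, I would use the decomposition
\[ T(f,g)=T(f_1,g_1)+T(f_0,g)+\big(T(f_1,g)-T(f_1,g_1)\big), \]
valid because $T$ is linear in its first argument. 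The first term is in $AP(\R)$ (resp. $C_w(\R)$) by the cases already treated, since $g_1\geq l$. The last two terms I would show to be in $PAP_0(\R)$ by an averaging argument: after the substitution $s=t-u$ and Fubini, $\frac1{2r}\int_{-r}^{r}|T(f_0,g)(t)|\,dt\leq\int_0^{+\infty}e^{-lu}\big(\frac1{2r}\int_{-r}^{r}|f_0(t-u)|\,dt\big)\,du\to0$ by dominated convergence (dominating function $e^{-lu}\|f_0\|_{\infty}$), and similarly, using the pointwise bound $|e^{-\int_s^t g}-e^{-\int_s^t g_1}|\leq e^{-l(t-s)}\int_s^t|g_0|$ (the Lipschitz estimate for $\exp$ on $]-\infty,-l(t-s)]$ already used in Lemma \ref{Contint}$(ii)$), the ergodic mean of $|T(f_1,g)-T(f_1,g_1)|$ is majorised by $\|f_1\|_{\infty}\int_0^{+\infty}e^{-lu}\big(\frac1{2r}\int_{-r}^{r}\int_{t-u}^{t}|g_0(v)|\,dv\,dt\big)\,du$, which tends to $0$ for the same reason (dominating function $e^{-lu}u\,\|g_0\|_{\infty}$). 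Hence $T(f,g)\in PAP(\R)$ (resp. $PP_w(\R)$).

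\smallskip
I expect the main obstacle to be the preliminary claim that the $AP$ (resp. $w$-periodic) component of $g$ is itself bounded below by $l$, since the topological direct sum $PAP(\R)=AP(\R)\oplus PAP_0(\R)$ is not a priori order-compatible; everything after that is bookkeeping forced by the non-linearity of $T$ in its second argument, which is precisely what produces the correction term $T(f_1,g)-T(f_1,g_1)$ whose ergodicity has to be read off from that of $g_0$.
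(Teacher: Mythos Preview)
Your proposal is correct. The paper's own proof is essentially a list of citations (to \cite{DLN}, \cite{Chz}, \cite{BC}) together with the remark that $BC(\R)$ and $C_w(\R)$ are clear and that $BC_U(\R)$ follows from Lemma~\ref{Contint}$(i)$; so there is little to compare on the level of strategy. That said, two of your choices are worth flagging.

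First, your treatment of $AP(\R)$ via Bochner's criterion combined with the translation identity $T(f,g)(\cdot+a)=T(f(\cdot+a),g(\cdot+a))$ and the Lipschitz continuity of $T$ from Lemma~\ref{Contint}$(ii)$ is clean and entirely internal to the paper's toolbox, whereas the paper simply quotes \cite[Lemma~1.3]{DLN}. Second, your observation that the $PAP_0(\R)$ case is \emph{vacuous} because $PAP_0(\R)_{[l,+\infty[}=\emptyset$ for $l>0$ is a genuine simplification: the paper instead directs the reader to follow the proof of \cite[Lemma~1.3]{Chz}, presumably with a non-trivial argument in mind, but none is needed. For $PAP(\R)$ and $PP_w(\R)$ your decomposition $T(f,g)=T(f_1,g_1)+T(f_0,g)+\bigl(T(f_1,g)-T(f_1,g_1)\bigr)$, the preliminary step $g_1\ge l$ via the density argument, and the dominated-convergence estimates for the two ergodic pieces are exactly the kind of computation the reference \cite{BC} carries out; you have reproduced it self-containedly and correctly.
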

\begin{proof} The result is clear and easy  for $X=BC(\R)$, $C_w(\R)$. For $X=BC_U(\R)$, we use the point $(i)$ of Lemma \ref{Contint}.  The proof for $X=AP(\R)$  can be found in \cite[Lemma 1.3]{DLN}.  For $X=PAP_0(\R)$, just follow the proof of \cite[Lemma 1.3]{Chz}. The proof for $X=PAP(\R)$ is given in step 2 of the proof of \cite[Theorem 1]{BC} and finally the proof  for $X=PP_w(\R)$ can be given in the same way. 
\end{proof}

Given two functions $f : \R\to \R$ and $g :\R \to \R$, their convolution $f *g$, if it exists, is defined by
$$f*g(t)=\int_{-\infty}^{+\infty} f(s)g(s-t) ds.$$
One can generate various types of almost periodic functions using the convolution.
\begin{proposition} $($see \cite[Proposition 3.4 and Proposition 5.3]{Dt}$)$ \label{conv} The following assertions hold.

$(i)$ Let $x\in AP(\R)$ and $\alpha\in L^1(\R)$. Then $x* \alpha\in AP(\R)$.

$(ii)$ Let $x\in PAP_0(\R)$ and $\alpha\in L^1(\R)$. Then $x* \alpha\in PAP_0(\R)$.

$(iii)$  Let $x\in PAP(\R)$ and $\alpha\in L^1(\R)$. Then $x* \alpha\in PAP(\R)$.
\end{proposition}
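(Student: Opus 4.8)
The plan is to establish the three assertions in order, with $(iii)$ following at once from $(i)$ and $(ii)$. Throughout I would use the substitution $u=s-t$ to rewrite the convolution as $x*\alpha(t)=\int_{\R}x(t+u)\alpha(u)\,du$, which exposes the translation structure, while keeping the original form $x*\alpha(t)=\int_{\R}x(s)\alpha(s-t)\,ds$ available for continuity arguments. Note first that in all cases $x*\alpha$ is bounded, $|x*\alpha(t)|\le\|x\|_{\infty}\|\alpha\|_1$, and continuous: indeed $|x*\alpha(t+h)-x*\alpha(t)|\le\|x\|_{\infty}\int_{\R}|\alpha(s-t-h)-\alpha(s-t)|\,ds=\|x\|_{\infty}\|\alpha(\cdot-h)-\alpha\|_1\to 0$ as $h\to 0$ by continuity of translation in $L^1(\R)$. (This argument is preferable to using uniform continuity of $x$, since in part $(ii)$ an element of $PAP_0(\R)$ need not be uniformly continuous.)

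For $(i)$, it remains to check the Bohr condition. If $\alpha=0$ there is nothing to prove; otherwise, given $\varepsilon>0$, almost periodicity of $x$ yields $l>0$ such that every interval of length $l$ contains a $\tau$ with $\sup_{t\in\R}|x(t+\tau)-x(t)|<\varepsilon/\|\alpha\|_1$. For such $\tau$ and any $t\in\R$,
$$|x*\alpha(t+\tau)-x*\alpha(t)|=\Big|\int_{\R}\big(x(t+u+\tau)-x(t+u)\big)\alpha(u)\,du\Big|\le\frac{\varepsilon}{\|\alpha\|_1}\|\alpha\|_1=\varepsilon,$$
so $\tau$ is an $\varepsilon$-period of $x*\alpha$; hence $x*\alpha\in AP(\R)$.

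For $(ii)$, the point is to show $\lim_{r\to+\infty}\frac{1}{2r}\int_{-r}^{r}|x*\alpha(t)|\,dt=0$. Using Tonelli's theorem,
$$\frac{1}{2r}\int_{-r}^{r}|x*\alpha(t)|\,dt\le\frac{1}{2r}\int_{-r}^{r}\!\!\int_{\R}|x(t+u)|\,|\alpha(u)|\,du\,dt=\int_{\R}|\alpha(u)|\,\phi_r(u)\,du,\qquad \phi_r(u):=\frac{1}{2r}\int_{-r+u}^{r+u}|x(s)|\,ds.$$
Then $0\le\phi_r(u)\le\|x\|_{\infty}$, and writing $\phi_r(u)=\frac{1}{2r}\int_{-r}^{r}|x(s)|\,ds+\frac{1}{2r}\big(\int_{r}^{r+u}|x(s)|\,ds-\int_{-r}^{-r+u}|x(s)|\,ds\big)$ one sees that $\phi_r(u)\to 0$ for each fixed $u$: the first term tends to $0$ because $x\in PAP_0(\R)$, and the remainder is bounded in absolute value by $|u|\,\|x\|_{\infty}/r$. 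Since $\|x\|_{\infty}|\alpha(\cdot)|\in L^1(\R)$, the dominated convergence theorem gives $\int_{\R}|\alpha(u)|\phi_r(u)\,du\to 0$, hence $x*\alpha\in PAP_0(\R)$. Finally, $(iii)$ is immediate: decompose $x=x_1+x_2$ with $x_1\in AP(\R)$ and $x_2\in PAP_0(\R)$ (the direct sum defining $PAP(\R)$); bilinearity of the convolution gives $x*\alpha=x_1*\alpha+x_2*\alpha$, which lies in $AP(\R)\oplus PAP_0(\R)=PAP(\R)$ by $(i)$ and $(ii)$.

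The only genuinely delicate step is the one in $(ii)$: justifying the Tonelli interchange (joint measurability and finiteness of the double integral, both clear here) and then the passage to the limit under the integral sign via dominated convergence. Once the pointwise limit $\phi_r(u)\to 0$ together with the uniform bound $\phi_r\le\|x\|_{\infty}$ is in hand, the rest is routine. Parts $(i)$ and $(iii)$ present no real obstacle.
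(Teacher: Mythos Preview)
Your argument is correct in all three parts: the Bohr estimate in $(i)$, the Tonelli/dominated-convergence computation in $(ii)$ (including the careful pointwise bound $|\phi_r(u)-\frac{1}{2r}\int_{-r}^r|x(s)|\,ds|\le |u|\,\|x\|_{\infty}/r$), and the decomposition in $(iii)$ are all sound, and the continuity argument via $L^1$-translation is a good choice since it covers the $PAP_0$ case without assuming uniform continuity of $x$. Note, however, that the paper does not supply its own proof of this proposition; it simply quotes the result from \cite[Proposition~3.4 and Proposition~5.3]{Dt}, so there is no in-paper argument to compare against---your proof is a standard self-contained justification of the cited facts.
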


Now,  we give a general way to construct Lipschitz functions $F: PAP(\R) \to PAP(\R)$. 

\begin{definition} $($see \cite{Chz}$)$  Let $\Omega\subset \R$. A continuous function $f: \Omega \times \R \to \R$ is called pseudo almost periodic in $t$ uniformly with respect $x\in \Omega$, if the two following conditions are satisfied:

$(i)$  $\forall x\in \Omega$, $f(x,\cdot)\in PAP(\R)$.

$(ii)$ for all compact set $K\subset \Omega$, we have that: $\forall \varepsilon >0$, $\exists \delta>0$, $\forall t\in \R$, $\forall x, y \in K$:
$$|x-y|\leq \delta \Longrightarrow |f(x,t)-f(y,t)|\leq \varepsilon.$$
The set of all such functions will be denoted $PAP_U(\\Omega\times R)$.
\end{definition}

\begin{lemma} $($see \cite{CieuFG}$)$ Let $f\in PAP_U(\R)$ and $x\in PAP(\R)$. Suppose that bounded subset $B$ of $\R$, $f$ is bounded on $B\times \R$. Then, the function $[t\mapsto f(x(t),t)]\in PAP(\R)$.
\end{lemma}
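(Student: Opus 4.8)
\textbf{Proof plan for the lemma ``$f\in PAP_U(\R)$, $x\in PAP(\R)$, $f$ bounded on $B\times\R$ for bounded $B$ $\Rightarrow$ $[t\mapsto f(x(t),t)]\in PAP(\R)$''.}

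The plan is to follow the standard composition argument for pseudo almost periodic functions, decomposing everything along the direct sum $PAP(\R)=AP(\R)\oplus PAP_0(\R)$. First I would write $x=x_1+x_2$ with $x_1\in AP(\R)$ and $x_2\in PAP_0(\R)$, and similarly decompose the map $f$ via its ``principal part'': since $f\in PAP_U(\R)$, for each fixed first variable $y$ we have $f(y,\cdot)=g(y,\cdot)+\varphi(y,\cdot)$ with $g(y,\cdot)\in AP(\R)$ uniformly in $y$ on compacts and $\varphi(y,\cdot)\in PAP_0(\R)$. Then I would split
\[
f(x(t),t)=g(x_1(t),t)+\bigl(g(x(t),t)-g(x_1(t),t)\bigr)+\varphi(x(t),t),
\]
and argue that the first term lies in $AP(\R)$ while the last two lie in $PAP_0(\R)$, so that the sum lies in $PAP(\R)$.

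The first term is handled by the classical composition theorem for Bohr almost periodic functions: if $g$ is almost periodic in $t$ uniformly on compact sets of the first variable and $x_1\in AP(\R)$, then $g(x_1(\cdot),\cdot)\in AP(\R)$; here one uses that $x_1(\R)$ is relatively compact (being the continuous image of $\R$ under a bounded uniformly continuous map, $x_1$ being almost periodic) together with the uniform continuity estimate to produce common $\varepsilon$-periods. For the second term, I would use the uniform-in-$t$ equicontinuity of $g$ on the compact set $K:=\overline{x(\R)}\cup\overline{x_1(\R)}$ (which is where the hypothesis ``$f$ bounded on $B\times\R$ for bounded $B$'' enters, guaranteeing the relevant sets are bounded hence relatively compact) to bound $|g(x(t),t)-g(x_1(t),t)|$ by a modulus of continuity evaluated at $|x_2(t)|$; since $x_2\in PAP_0(\R)$ and the modulus is subadditive/monotone, a standard splitting of the averaging integral $\frac1{2r}\int_{-r}^r$ into the region where $|x_2(t)|$ is small and its complement shows this difference is in $PAP_0(\R)$. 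For the third term, $\varphi(x(t),t)$, I would show directly that it is ergodic: again using equicontinuity to replace $\varphi(x(t),t)$ by $\varphi(x_1(t),t)$ up to a $PAP_0(\R)$ error, and then covering the compact $\overline{x_1(\R)}$ by finitely many small balls centred at points $y_1,\dots,y_N$, on each of which $\varphi(x_1(t),t)\approx\varphi(y_i,t)\in PAP_0(\R)$, to conclude $\varphi(x_1(\cdot),\cdot)\in PAP_0(\R)$ and hence $\varphi(x(\cdot),\cdot)\in PAP_0(\R)$.

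The main obstacle is the bookkeeping around uniformity: one must make sure the compact set $K$ on which the equicontinuity hypothesis of $PAP_U(\R)$ is invoked genuinely contains the ranges of all the functions appearing (and that these ranges are indeed relatively compact), and that the boundedness hypothesis on $f$ is used precisely where needed to control $\|g(y,\cdot)\|_\infty$ and $\|\varphi(y,\cdot)\|_\infty$ uniformly for $y\in K$ — otherwise the ergodic estimates for the remainder terms do not close. Once the decomposition $f=g+\varphi$ with the stated uniformity is in hand, the three pieces go through by the classical AP composition theorem and two routine ``small set / large set'' splittings of the Ces\`aro average, so I would not belabour those computations. This is exactly the argument of the cited reference \cite{CieuFG}, which I would invoke for the details rather than reproduce in full.
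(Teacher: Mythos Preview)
The paper gives no proof of this lemma at all; it merely records the statement with the citation ``(see \cite{CieuFG})'' and moves on. Your proposal is therefore not comparable to a paper proof in the usual sense --- you have sketched the actual argument where the paper simply defers to the literature. Your outline is the standard composition proof for pseudo almost periodic functions (decompose $x$ and $f$ along $AP\oplus PAP_0$, handle $g(x_1(\cdot),\cdot)$ by the classical AP composition theorem, and push the two remainder terms into $PAP_0$ via equicontinuity on the compact closure of the range together with a small-set/large-set splitting of the Ces\`aro mean), which is indeed the argument of \cite{CieuFG}. Since you explicitly say you would invoke \cite{CieuFG} rather than reproduce the details, your treatment is fully aligned with --- and in fact more informative than --- the paper's own handling of this lemma.
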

Using the above lemma, we deduce easily the following proposition.
\begin{proposition} \label{pap_u} Let $f\in PAP_U(\R)$ be a Lipschitz function with respect to the first variable, that is, there exists $L_f\geq 0$ such that:
$$|f(s,t)-f(s',t)|\leq L_f|s-s'|, \hspace{2mm} \forall s, s', t\in \R.$$
Then, the function $F$ defined by $F(x):=f(x(\cdot),\cdot)$ is a Lipschitz function for the norm $\|\cdot\|_{\infty}$  that maps $(PAP(\R), \|\cdot\|_{\infty})$ into  $(PAP(\R), \|\cdot\|_{\infty})$.
\end{proposition}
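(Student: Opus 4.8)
The plan is to invoke the preceding lemma (that for $f \in PAP_U(\R)$ bounded on bounded-times-$\R$ sets and $x \in PAP(\R)$, the function $t \mapsto f(x(t),t)$ lies in $PAP(\R)$) to settle the mapping property, and then to derive the Lipschitz estimate directly from the hypothesis on $f$. First I would check that the hypotheses of the cited lemma are met: a Lipschitz function $f$ (with respect to the first variable, uniformly in $t$) is automatically bounded on any set $B \times \R$ with $B \subset \R$ bounded, since $|f(s,t)| \le |f(0,t)| + L_f|s|$ and $f(0,\cdot) \in PAP(\R) \subset BC(\R)$ is bounded; moreover the Lipschitz condition in $s$, uniform in $t$, immediately implies condition $(ii)$ in the definition of $PAP_U$ (take $\delta = \varepsilon/L_f$, or any $\delta$ if $L_f = 0$). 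Hence for each $x \in PAP(\R)$ the function $F(x) = f(x(\cdot),\cdot)$ belongs to $PAP(\R)$, so $F$ maps $PAP(\R)$ into $PAP(\R)$.

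Next I would establish the Lipschitz bound for $F$ in the sup-norm. For $x, y \in PAP(\R)$ and any $t \in \R$, the pointwise Lipschitz hypothesis gives
\begin{eqnarray*}
|F(x)(t) - F(y)(t)| = |f(x(t),t) - f(y(t),t)| \leq L_f |x(t) - y(t)| \leq L_f \|x - y\|_{\infty}.
\end{eqnarray*}
Taking the supremum over $t \in \R$ yields $\|F(x) - F(y)\|_{\infty} \leq L_f \|x - y\|_{\infty}$, so $F$ is $L_f$-Lipschitz for $\|\cdot\|_{\infty}$. Combined with the previous paragraph, $F : (PAP(\R), \|\cdot\|_{\infty}) \to (PAP(\R), \|\cdot\|_{\infty})$ is a well-defined Lipschitz map, which is exactly the claim.

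I do not expect any genuine obstacle here: the statement is essentially a corollary of the cited lemma, and the only points needing care are the bookkeeping verifications that a Lipschitz $f$ satisfies the structural requirements ($PAP_U$ membership and local boundedness) needed to apply that lemma. If one wanted to be self-contained rather than citing, the work would shift to reproving that $t \mapsto f(x(t),t) \in PAP(\R)$, which involves decomposing $x = x_1 + x_0$ with $x_1 \in AP(\R)$, $x_0 \in PAP_0(\R)$ and controlling the ergodic part via the uniform continuity of $f$ in the first variable — but since that lemma is available in the excerpt, this is unnecessary.
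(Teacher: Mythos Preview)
Your proposal is correct and follows exactly the route the paper indicates: the paper gives no formal proof but simply prefaces the proposition with ``Using the above lemma, we deduce easily the following proposition,'' which is precisely your strategy of invoking the cited composition lemma for the mapping property and then reading off the Lipschitz bound pointwise. Your additional bookkeeping (verifying the boundedness hypothesis of the lemma via $|f(s,t)|\le|f(0,t)|+L_f|s|$, and noting that the Lipschitz condition already forces condition~$(ii)$ of $PAP_U$) is more detail than the paper supplies, but nothing is missing or incorrect.
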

The above proposition says that each Lipschitz function $f\in PAP_U(\R)$ induce a Lipschitz function $F: (PAP(\R), \|\cdot\|_{\infty}) \to (PAP(\R), \|\cdot\|_{\infty})$. However, the converse is not true in general. Indeed, for any fixed $\alpha\in L^1(\R)\setminus \lbrace 0 \rbrace$, the $\|\alpha\|_1$-Lipschitz function $F:  (PAP(\R), \|\cdot\|_{\infty}) \to (PAP(\R), \|\cdot\|_{\infty})$, defined by $$F(x,t):=sin(t)+sin(\sqrt{2}t)+ (1+t^2)^{-1}x*\alpha(t)$$ cannot, under any circumstances, be written in the form $f(x(t),t)$ for some $f\in PAP_U(\R)$. This prove  that there are many more  continuous (Lipschitz) functions $F:  (PAP(\R), \|\cdot\|_{\infty}) \to (PAP(\R), \|\cdot\|_{\infty})$, than those which come from the functions $f\in PAP_U(\R)$ as in Proposition \ref{pap_u}.
\vskip5mm

Now, we give simple examples satisfying our theorems. We start with the example announced in the introduction.

\begin{Exemp} \label{Ex0}
Let $\alpha, \beta\in L^1(\R)$ be such that $0<\|\alpha\|_1\leq 1$. 
\begin{eqnarray*}
F(x,t)&=&\frac{1}{3}(sin(t)+sin(\sqrt{2}t)+ (1+t^2)^{-1}x*\alpha(t))\\
G(x,t)&=&3+sin(2t)+(1+t^2)^{-1}cos(x*\beta(t)).
\end{eqnarray*}
The hypothesis $(H_0)$ is satisfied by Proposition \ref{algebra}. We are going to prove that the hypothesis $(H_1)$ is also satisfied. Indeed, clearly, $F, G: PAP(\R)\to PAP(\R)$ are $\|\alpha\|_1$-Lipschitz and $\|\beta\|_1$-Lipschitz respectively (Notice that $F$ is not bounded but $F$ and $G$ are bounded on bounded sets).  We have that $G(x,t)\geq 1$, for every $(x,t)\in PAP(\R)\times \R$. On the other hand, we have that $|F(x,t)|\leq 2+\|x\|_{\infty}\|\alpha\|_1$ for every $(x,t)\in PAP(\R)\times \R$. Then, for every $x\in  PAP(\R)$ such that $\|x\|_{\infty}\leq \frac{1}{\|\alpha\|_1}$, we have that 
\begin{eqnarray*}
\left|\frac{F(x,t)}{G(x,t)}\right|&\leq& \frac{1}{3}(2+\|x\|_{\infty}\|\alpha\|_1)\\
&\leq &1\\
&\leq& \frac{1}{\|\alpha\|_1}.
\end{eqnarray*}
Thus, 
$$-\frac{1}{\|\alpha\|_1}\leq \frac{F(x,t)}{G(x,t)}\leq \frac{1}{\|\alpha\|_1}, \hspace{2mm} \forall (x,t)\in B_{[-\frac{1}{\|\alpha\|_1},\frac{1}{\|\alpha\|_1}]}\times \R.$$
Now, let $(x_n)\subset PAP(\R)$ be a sequence converging on each compact of $\R$ to some $x\in BC(\R)$. Then, there exists a constant $M\geq \|x\|_{\infty}$ such that $\|x_n\|_{\infty}\leq M$ for all $n\in \N$. Then, for every $\varepsilon>0$, there exists $A_\varepsilon>0$ such that for every $|t| \geq A_\varepsilon$, we have that $(1+t^2)^{-1}\leq \frac{\varepsilon}{2M\|\alpha\|_1}$. Thus, 
\begin{eqnarray*}
(1+t^2)^{-1}|x_n*\alpha(t)-x*\alpha(t)|&\leq& 2M\|\alpha\|_1(1+t^2)^{-1}<\varepsilon, \hspace{2mm} \forall |t| \geq A_\varepsilon.
%&\leq& 2M\|\alpha\|_1(1+t^2)^{-1}
\end{eqnarray*}
On the other hand, there exists $N\in \N$ such that for every $n\geq N$, we have that $\sup_{t\in [-A_\varepsilon,A_\varepsilon]}|x_n(t)-x(t)|<\varepsilon$. Hence, 
\begin{eqnarray*}
\sup_{t\in \R} (1+t^2)^{-1}|x_n*\alpha(t)-x*\alpha(t)|&\leq& 2\varepsilon, \hspace{2mm} \forall n\geq N.
%&\leq& 2M\|\alpha\|_1(1+t^2)^{-1}
\end{eqnarray*}
Hence, we get that the sequence $(F(x_n))$ norm converges in $BC(\R)$ and so it is relatively compact in $PAP(\R)$. The same argument hold for $(G(x_n))$. Thus, the hypothesis $(H_0)$ and $(H_1)$ are satisfied, so Theorem \ref{existence} gives at last one solution $x^*\in PAP(\R)$ for the equation $(E)$. 
\end{Exemp}

\begin{Exemp} \label{Ex1} ({\bf Example in the space $PP_w(\R)\subset PAP(\R)$ under the hypothesis $(H_1)$}) As a simple consequence of Theorem \ref{existence},  we obtain that  the following equation has a positive solution $x^*\in PAP(\R)$
\begin{eqnarray*}
x'(t)+G(x,t) x(t)&=& F(x,t)
\end{eqnarray*}
where $F, G:  PP_w(\R) \to PP_w(\R)$ are defined by $G(x,t)=e^{\|x\|_{0,1}+\frac{1}{1+t^2}sin(x(t))}$ and  $F(x,t):=3+sin(\|x\|_{0,1} )+\frac{1}{1+t^2}cos(x(t))$, where  $\|x\|_{0,1}:=\int_0^1 |x(s)|ds\leq \|x\|_{\infty}$. 

$\bullet$ The space $PP_w(\R)$ satisfies $(H_0)$ by Proposition \ref{algebra}.

$\bullet$ The hypothesis $(H_1)$ is satisfied. Indeed, clearly we have that $$\inf_{x\in X, t\in \R} G(x,t)\geq e^{-1}>0.$$ Let us set $G_0(x,t):=\|x\|_{0,1}+\frac{1}{1+t^2}sin(x(t))$, we have that
\begin{eqnarray*}
|G_0(x,t)-G_0(y,t)|&\leq& \|x-y\|_{0,1}+\frac{1}{1+t^2}|sin(x(t))-sin(y(t))|\\
&\leq& \|x-y\|_{0,1}+\frac{1}{1+t^2}|x(t)-y(t)|\\
&\leq& 2\|x-y\|_{\infty}.
\end{eqnarray*}
Then, $G_0$ is Lipschitz from $PP_w(\R)$ into $PP_w(\R)$. Since $t\mapsto e^t$ is continuous, it follows that $G$ is continuous from $PP_w(\R)$ into $PP_w(\R)$. On the other hand, it is clear that $G$ is bounded on bounded sets. Finally, let $(x_n)\subset X$ be a sequence  converging on each compact of $\R$.  Notice that $|\|x_n\|_{0,1}-\|x_m\|_{0,1}|\leq \|x_n-x_m\|_{0,1}\leq \sup_{t\in [0,1]}|x_n(t)-x_m(t)|$. On the other hand, we see that for every $A\geq 1$, we have that
\begin{eqnarray*}
\sup_{t\in \R}|\frac{1}{1+t^2}sin(x_n(t))-\frac{1}{1+t^2}sin(x_m(t))|&\leq&\\
\max(\sup_{t\in [-A,A]}|\frac{1}{1+t^2}(sin(x_n(t))-sin(x_m(t)))|,\frac{2}{1+A^2})&\leq&\\
\max(\sup_{t\in [-A,A]}|x_n(t)-x_m(t)|,\frac{2}{1+A^2}).
\end{eqnarray*}
It follows that 
\begin{eqnarray*}
\|G_0(x_n)-G_0(x_m)\|_{\infty} \leq \sup_{t\in [0,1]}|x_n(t)-x_m(t)| +\max(\sup_{t\in [-A,A]}|x_n(t)-x_m(t)|,\frac{2}{1+A^2}).
\end{eqnarray*}
So, $\limsup_{n,m\to+\infty} \|G_0(x_n)-G_0(x_m)\|_{\infty}\leq \frac{2}{1+A^2}$ for every $A \geq 1$. Sending $A$ to $+\infty$, we get that $(G_0(x_n))$ norm converges and by the continuity of the function $t\mapsto e^t$, we have that $(G(x_n))$ norm converges.

 As above, we see that $(F(x_n))$  norm converges. Now, it is clear $0\leq F(x,t)\leq 5$ for every $x\in PP_w(\R)$ and every $t\in \R$. Thus, we can take 
$$0\leq k:=  \inf_{(x,t)\in X\times \R} \frac{F(x,t)}{G(x,t)} \textnormal{ and } M:= \sup_{(x,t)\in X\times \R} \frac{F(x,t)}{G(x,t)},$$
and so the hypothesis $(H_1)$ is satisfied. Using Theorem \ref{existence} we get that there exists at last one positive solution of the equation $(E)$.

\end{Exemp}

\begin{Exemp} ({\bf Example in the space  $C_{2\pi}(\R)$ under the hypothesis $(\tilde{H}_1)$}.) We have the following simple example.

$\bullet$ $G:  C_{2\pi}(\R) \to C_{2\pi}(\R)$, defined by $$G(x,t)=4+(1+\|x\|_{0,1})(1+sin(t)),$$
 is a $2$-Lipschitz (on the variable $x$)  and satisfies $\inf_{x\in X, t\in \R} G(x,t)\geq l=4$ (where  $\|x\|_{0,1}:=\int_0^1 |x(s)|ds\leq \|x\|_{\infty}$)
%$$G(x,t)=\frac{2+\sup_{t\in [-1,1]}|x(t)|}{1+\sup_{t\in [-1,1]}|x(t)|}(5+10t^2)+sin(t)+ (1+t^2)^{-1}cos(x(t)),$$

$\bullet$ $F:  C_{2\pi}(\R) \to C_{2\pi}(\R)$, defined by $F(x,t):=2+sin(t)+cos(x(t))$, is $1$-Lipschitz (on the variable $x$) and  is bounded by $r=4$.

Since $\max(\frac{r}{l^2}, \frac{1}{l})(L_F+L_G)= \frac{3}{4}<1,$ then $(\tilde{H}_1)$ is satisfied. This permit to give thanks to Theorem \ref{existence} a  continuous $2\pi$-periodic solution to the equation $(E)$. Moreover, there exists a unique solution in $B_{[0,1/2]}$.
\end{Exemp}
%%%%%%%%%%%%%%%%
%%%%%%%%%%%%%%%%%%%%
\section{Global attractivity of solutions} \label{Global}
In this section, we give a result on the attractivity of solutions of the equation $(E)$. 

\begin{definition} A solution $x^*$ of the equation $(E)$, is said to be globally attractive if for any other solution $x$ of $(E)$, we have that $\lim_{t\to +\infty} |x(t)-x^*(t)|=0$.
\end{definition}
Consider the following condition:

$(C)$ the functions $F, G :X\to X$ satisfies $(H_1)$ and moreover: there exists $L_F, L_G \geq 0$ such that 

$(i)$ $|F(x,t)-F(y,t)|\leq L_F |x(t)-y(t)|$ and $|G(x,t)-G(y,t)|\leq L_G |x(t)-y(t)|$, $\forall x, y \in X$ and $t\in \R$.

$(ii)$ $l> L_{G}\max(M,-k) +L_{F},$ where $l>0$, $k$ and $M$ ($k\leq M$) are given by the hypothesis $(H_1)$.

Notice that general examples of functions satisfying the point $(i)$ above, are given by Proposition \ref{pap_u} (see also Example \ref{Exatt}).
\begin{theorem} \label{attractivity} Under the hypothesis $(H_0)$ and $(C)$, there exists at last one solution $x^*\in X$ for the equation $(E)$, which is globally attractive.
\end{theorem}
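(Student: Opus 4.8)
The plan is to take existence for free from Theorem~\ref{existence} and then prove attractivity by a one-dimensional differential inequality applied to the square of the difference of two solutions. Since condition $(C)$ explicitly contains $(H_1)$ and $(H_0)$ is assumed, Theorem~\ref{existence} already supplies a solution $x^{*}\in B_{[k,M]}$ of $(E)$; it remains only to show that this particular $x^{*}$ is globally attractive.

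Let $x\in X$ be an arbitrary solution of $(E)$. Since $x,x^{*}\in X\subset BC(\R)$, both are bounded, hence so is $y:=x-x^{*}$. Subtracting the equation satisfied by $x^{*}$ from that satisfied by $x$ and writing
$$G(x,t)x(t)-G(x^{*},t)x^{*}(t)=G(x,t)\,y(t)+x^{*}(t)\bigl(G(x,t)-G(x^{*},t)\bigr),$$
one sees that $y$ solves the scalar linear ODE $y'(t)+g(t)\,y(t)=h(t)$ on $\R$, where $g(t):=G(x,t)$ and $h(t):=F(x,t)-F(x^{*},t)-x^{*}(t)\bigl(G(x,t)-G(x^{*},t)\bigr)$. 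Both $g$ and $h$ are continuous, and $g(t)\ge l>0$ for every $t$ because $l=\inf_{u\in X,\,s\in\R}G(u,s)$ is a global infimum, so it bounds $G(x,\cdot)$ below even when $x\notin B_{[k,M]}$.

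Next I bound $h$: by part $(i)$ of $(C)$, $|F(x,t)-F(x^{*},t)|\le L_F|y(t)|$ and $|G(x,t)-G(x^{*},t)|\le L_G|y(t)|$, while $|x^{*}(t)|\le\max(M,-k)$ because $x^{*}(t)\in[k,M]$; hence $|h(t)|\le C|y(t)|$ with $C:=L_F+L_G\max(M,-k)$, and part $(ii)$ of $(C)$ says exactly that $C<l$. Now put $z(t):=y(t)^{2}$, which is $C^{1}$, and use $y'=h-gy$ to get
$$z'(t)=2y(t)y'(t)=2y(t)h(t)-2g(t)y(t)^{2}\le 2C\,z(t)-2l\,z(t)=-2(l-C)\,z(t).$$
Thus $\bigl(e^{2(l-C)t}z(t)\bigr)'\le 0$, so $|x(t)-x^{*}(t)|=|y(t)|\le|y(0)|\,e^{-(l-C)t}$ for $t\ge 0$, which tends to $0$ as $t\to+\infty$ since $l-C>0$; this is precisely the global attractivity of $x^{*}$. (Alternatively one could express $y$ by variation of parameters and finish with Gronwall's lemma.)

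The computation is essentially routine, and I expect no serious obstacle. The only points that genuinely need care are the algebraic splitting that isolates the linear term $g\,y$ from the truly nonlinear contributions to $h$, the remark that $g(t)\ge l$ for \emph{every} solution of $(E)$ because $l$ is a global infimum of $G$ rather than an infimum over $B_{[k,M]}$ only, and the verification that part $(ii)$ of $(C)$ is exactly the inequality $C<l$ needed to produce a strictly positive exponential decay rate.
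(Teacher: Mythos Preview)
Your proof is correct and reaches the conclusion by a somewhat different, and in fact cleaner, route than the paper.

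The paper works with the Lyapunov functional $W(t)=|x(t)-x^{*}(t)|$, computes its upper Dini derivative, arrives at the same inequality $D^{+}W(t)\le -(l-C)\,W(t)$ with your $C=L_G\max(M,-k)+L_F$, but then, instead of concluding exponential decay directly, integrates to obtain $\int_{0}^{\infty}|x(s)-x^{*}(s)|\,ds<\infty$ and finishes with a Barbalat-type step, first checking (via boundedness of $x'$ and $(x^{*})'$) that $|x-x^{*}|$ is uniformly continuous. Your choice of $z=y^{2}$ avoids Dini derivatives altogether, needs no appeal to uniform continuity, and delivers the stronger conclusion $|x(t)-x^{*}(t)|\le|x(0)-x^{*}(0)|\,e^{-(l-C)t}$, i.e.\ an explicit exponential rate. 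The algebraic splitting of $G(x,t)x-G(x^{*},t)x^{*}$ and the use of $x^{*}\in B_{[k,M]}$ to bound $|x^{*}(t)|$ by $\max(M,-k)$ are identical in both arguments, as is the observation that $l$ is a global infimum of $G$ and hence bounds $G(x,\cdot)$ below for any solution $x$.
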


\begin{proof} By Theorem \ref{existence}, there exists at last one solution $x^*\in B_{[k,M]}$, this implies that  $\|x^*\|_{\infty}\leq \max(M,-k)$.  Suppose that $x$ is another solution of $(\tilde{E})$.  In this case, we have that for every $t\in \R$
$$x'(t)=-G(x,t)x(t)+F(x,t),$$
$$(x^*)'(t)=-G(x^*,t)x^*(t)+F(x^*,t).$$

It follows that $|x'(t)|\leq \|G(x)\|_{\infty} \|x\|_{\infty}+\|F(x)\|_{\infty}<+\infty$ for every $t\in \R$ and so $x$ is a Lipschitz function on $\R$ by the mean value theorem. Similarily, $x^*$ is a Lipschitz function on $\R$.  Let us denote $sgn(t)$ the number which is equal to $1$ if $t\geq 0$ and $-1$ otherwize. Consider the following Lyapunov functional:
$$ W(t)=|x(t)-x^*(t)|, \hspace{3mm} \forall t\in \R.$$
After calculating the Dini derivative of $W$, we get, 
\begin{eqnarray*}
D^+W(t) &=&  sgn(x(t)-x^*(t))\left (x'(t))-(x^*)'(t)\right)\\
&=&  sgn(x(t)-x^*(t))[ -G(x,t)x(t)+G(x^*,t)x^*(t) + F(x,t)-F(x^*,t)]\\
&=&  sgn(x(t)-x^*(t))[ -(G(x,t)(x(t)-x^*(t))+\\
&&(G(x^*,t)-G(x,t))x^*(t) + F(x,t)-F(x^*,t)]\\
&=& -G(x,t)|x(t)-x^*(t)|+ sgn(x(t)-x^*(t))[G(x^*,t)-G(x,t)]x^*(t) + \\
&& sgn(x(t)-x^*(t))[F(x,t)-F(x^*,t)]\\
&\leq& -l|x(t)-x^*(t)|+(L_{G}\|x^*\|_{\infty} +L_{F} )|x(t)-x^*(t)|\\
&\leq& -(l-(L_{G}\max(M,-k) +L_{F})) |x(t)-x^*(t)|.
\end{eqnarray*}
An integration of the above inequality, gives  
\begin{eqnarray*}
 W(t) +\int_0^t  (l-(L_{G}\max(M,-k) +L_{F}))|x(t)-x^*(t)| ds &\leq& W(0).
\end{eqnarray*}
Since $W(t) \geq 0$ for every $t \in \R$, it follows that  
\begin{eqnarray*} 
\limsup_{t\to +\infty} \int_0^t  |x(t)-x^*(t)| ds &\leq& \frac{W(0)}{l-(L_{G}\max(M,-k)+L_{F})}<+\infty.
\end{eqnarray*}
Since $x, x^* \in X$ are uniformly continuous on $\R$ (in fact Lipschitz), we obtain  that  $\lim_{t\to +\infty} |x(t)-x^*(t)|=0.$
\end{proof}
%%%%%%%%%%%%%%%%%%%%%%
\begin{Exemp} \label{Exatt} ({\bf Example of globally attractive solution in the space $PAP(\R)$ under the hypothesis $(C)$}.) 

$\bullet$ $G:  PAP(\R) \to PAP(\R)$, defined by $$G(x,t)=4+sin(t)+sin(\sqrt{2}t) +\frac{1}{1+t^2}cos(x(t)),$$ is $1$-Lipschitz (on the variable $x$) and $l:=\inf_{x\in X, t\in \R} G(x,t)\geq 1>0.$

$\bullet$ $F:  PAP(\R) \to PAP(\R)$, defined by $$F(x,t):=\frac{1}{10}[2+cos(t)+\frac{1}{1+t^2}sin(x(t))],$$ is $1/10$-Lipschitz (on the variable $x$) and $0\leq F(x,t)\leq 4/10$.

We have that $$ 0=k\leq \frac{F(x,t)}{G(x,t)}\leq M=4/10.$$

As in the Example \ref{Ex1} the hypothesis $(H_1)$ is satisfied. Since $\max(M,-k)=4/10$, $L_G\leq 1$, $L_F\leq 1/10$ we have $l\geq 1> 4/10+ 1/10\geq \max(M,-k)L_G+L_F,$ then the condition $(C)$ is satisfied and so by Theorem \ref{attractivity} there exists at last one positive solution in $PAP(\R)$ which is globally attractive.  
\end{Exemp}
%%%%%%%%%%%%%%%%%%%%%%%
\section{The vector valued framework} \label{Vector}
The results of  existence of solutions developped in Section \ref{S1} can be easily extended to the vector-valued framework. We will just present the outline of the procedure to follow, since the proofs are similar to those given in Section \ref{S1}.

Let $(E,\|\cdot\|)$ be a finite dimensional Banach space (for example $E=\R^n$) and  $E^*$ its dual. Let $X$ be a Banach subspace of the Banach space  $(BC(\R,E),\|\cdot\|_{\infty})$  of all $E$-valued bounded continuous  functions equipped with the sup-norm and  let $e^*\in E^*$ and $F, G :X\to X$  be two functions. The goal of this section to give existence of solutions $x\in X$  to the first order differential equations of type (under conditions similar to those given in Section \ref{S1}):
$$(\widetilde{E})  \hspace{3mm} x'(t) +e^*(G(x,t)) x(t) = F(x,t), \hspace{2mm} \forall t\in \R.$$
By $B_X(0,c)$ we denote the closed ball of $X$ centred at $0$ with radious $c>0$. For each $l,r>0$ and $e^*\in E^*$, we define the following closed convex subsets of $X$:  
$$X_{[l,+\infty[,e^*}:=\lbrace x\in X: e^*(x(t))\in [l,+\infty[, \forall t\in \R\rbrace,$$
As in Section \ref{S1}, we introduce the following well defined operator, for each $l>0$ and each $e^*\in E^*$:
\begin{eqnarray*}
T: BC(\R,E) \times BC(\R,E)_{[l,+\infty[,e^*}&\to& BC(\R,E)\\
(f,g) &\mapsto& [t\mapsto \int_{-\infty}^t e^{-\int_s^t e^*(g(u))du} f(s) ds],
\end{eqnarray*}
and we have that for every $(f,g)\in  BC(\R,E)\times BC(\R,E)_{[l,+\infty[,e^*}$, the function $T(f,g)$ is differentiable and satisfies:
\begin{eqnarray*}
T(f,g)'(t) &=&-e^*(g(t))T(f,g)(t)+f(t), \hspace{2mm} \forall t\in \R.
\end{eqnarray*}

\vskip5mm

We consider  the following conditions $(HV_0)$, $(HV_1)$ and $(\widetilde{HV}_1)$:

$(HV_0)$ the subspace $X$ is invariant under $T$ in the sens that for each $l>0$ and every $e^*\in E^*$,  $T(X \times X_{[l,+\infty[,e^*})\subset X$.

$(HV_1)$ The functions  $F, G :(X,\|\cdot\|_{\infty}) \to (X,\|\cdot\|_{\infty})$ are continuous, $e^*\in E^*$ and : 

$\bullet$ $\inf_{x\in X, t\in \R}e^*(G(x,t)) >0$ and  there exists $c>0$ such that $F,G$ are bounded on $B_X(0,c)$ and $\frac{\|F(x,t)\|}{e^*(G(x,t))}\leq c$ for all $(x,t)\in B_X(0,c)\times \R$.

$\bullet$ for every sequence $(x_n)\subset X$, if $(x_n)$ converges on each compact of $\R$ in $BC(\R,E)$, then $(F(x_n))$ and $(G(x_n))$ are relatively compact in $(X,\|\cdot\|_{\infty})$.
\vskip5mm
 Notice that if we assume that $F$ and $G$ are bounded on the whole space $X$ and $\inf_{x\in X, t\in \R}e^*(G(x,t)) >0,$ then we can take  in the hypothesis $(HV_1)$
$$ c:= \sup_{(x,t)\in X\times \R} \frac{\|F(x,t)\|}{e^*(G(x,t))}.$$

$(\widetilde{HV}_1)$ The functions  $F, G :(X,\|\cdot\|_{\infty}) \to (X,\|\cdot\|_{\infty})$ are Lipschitz, $$l:=\inf_{x\in X, t\in \R}e^*(G(x,t)) >0,$$
and there exists $c>0$ such that, $\frac{\|F(x,t)\|}{e^*(G(x,t))}\leq c$ for all $(x,t)\in B_X(0,c)\times \R$, $$r:=\sup_{x\in B_C(0,c)}\|F(x)\|_{\infty}<+\infty$$
 and  $$\max(\frac{r}{l^2}, \frac{1}{l})(L_F+L_G)<1,$$
where $L_F$ and $L_G$ denotes the constant of Lipschitz of $F$ and $G$ respectively.
\vskip5mm

\begin{theorem}  Under the hypothesis $(HV_0)$ and $(HV_1)$ (resp. $(HV_0)$ and $(\widetilde{HV}_1)$), the equation $(EV)$  has at least one  solution $x^*$ in $B_X(0,c)$  (resp. has a unique solution $x^*$ in $B_X(0,c)$).
\end{theorem}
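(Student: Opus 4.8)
The plan is to repeat, almost verbatim, the argument of Section~\ref{S1}, with the scalar weight $g$ replaced everywhere by the scalar function $e^*\circ g$. First I would establish the two elementary facts on which the construction rests. Since $e^*(g(u))\ge l>0$ on $\R$, one has $\int_{-\infty}^{t}e^*(g(u))\,du=+\infty$, so the integral defining $T(f,g)$ converges absolutely (the exponential factor is dominated by $e^{-l(t-s)}$), the resulting function is $E$-valued, continuous and bounded with $\|T(f,g)\|_{\infty}\le\|f\|_{\infty}/l$, and, exactly as in Lemma~\ref{intG}, $\int_{-\infty}^{t}e^*(g(s))\,e^{-\int_s^{t}e^*(g(u))\,du}\,ds=1$. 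Differentiating under the integral sign gives $T(f,g)'(t)=-e^*(g(t))T(f,g)(t)+f(t)$, from which one reads off, as in Lemma~\ref{Contint}$(i)$, that $T(f,g)$ is Lipschitz with constant at most $\|f\|_{\infty}\big(\|e^*\|\,\|g\|_{\infty}/l+1\big)$; hence the family $\{T(f,g):(f,g)\in B_{BC(\R,E)}(0,r)\times BC(\R,E)_{[l,r'],e^*}\}$ is uniformly equicontinuous on $\R$. The two-variable Lipschitz estimate (the analogue of Lemma~\ref{Contint}$(ii)$) is obtained in the same manner: $x\mapsto e^{x}$ is $e^{-l(t-s)}$-Lipschitz on $]-\infty,-l(t-s)]$ and $|e^*(g_1(u))-e^*(g_2(u))|\le\|e^*\|\,\|g_1-g_2\|_{\infty}$, which yields $\|T(f_1,g_1)-T(f_2,g_2)\|_{\infty}\le\max(\tfrac{r}{l^{2}},\tfrac1l)\big(\|e^*\|\,\|g_1-g_2\|_{\infty}+\|f_1-f_2\|_{\infty}\big)$ on $B_{BC(\R,E)}(0,r)\times BC(\R,E)_{[l,+\infty[,e^*}$, the operator norm $\|e^*\|$ being a harmless constant that one may absorb, e.g. after normalising $e^*$.

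Next I would introduce $\Gamma(x):=T(F(x),G(x))$ for $x\in B_X(0,c)$; by $(HV_0)$ this maps $B_X(0,c)$ into $X$. It in fact maps $B_X(0,c)$ into itself: for $x\in B_X(0,c)$, $(HV_1)$ gives $\|F(x,s)\|\le c\,e^*(G(x,s))$ for all $s$, so $\|\Gamma(x)(t)\|\le\int_{-\infty}^{t}e^{-\int_s^{t}e^*(G(x,u))\,du}\|F(x,s)\|\,ds\le c\int_{-\infty}^{t}e^*(G(x,s))\,e^{-\int_s^{t}e^*(G(x,u))\,du}\,ds=c$ by the integral identity above, i.e. $\|\Gamma(x)\|_{\infty}\le c$. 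From the two-variable estimate, $\|\Gamma(x)-\Gamma(y)\|_{\infty}\le\max(\tfrac{r}{l^{2}},\tfrac1l)\big(\|F(x)-F(y)\|_{\infty}+\|e^*\|\,\|G(x)-G(y)\|_{\infty}\big)$, so under $(HV_1)$ the map $\Gamma$ is norm-to-norm continuous, and whenever $(x_n)\subset B_X(0,c)$ converges uniformly on every compact of $\R$, relative compactness of $(F(x_n))$ and $(G(x_n))$ forces relative compactness of $(\Gamma(x_n))$ in $(B_X(0,c),\|\cdot\|_{\infty})$. Moreover, taking $r:=\sup_{x\in B_X(0,c)}\|F(x)\|_{\infty}<+\infty$ and $r':=\sup_{x\in B_X(0,c),\,t\in\R}e^*(G(x,t))$, the equicontinuity statement above shows $\Gamma(B_X(0,c))$ is equicontinuous at every point; and, this being the one genuinely new ingredient, since $E$ is finite dimensional the bounded set $\Gamma(B_X(0,c))(t)\subset B_E(0,c)$ is relatively compact in $E$ for each $t$. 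This is the vector analogue of Lemma~\ref{PAP}.

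The two cases are then closed exactly as in the proof of Theorem~\ref{existence}. Under $(HV_0)$ and $(HV_1)$, put $K:=\overline{\textnormal{co}}^{\|\cdot\|_{\infty}}(\Gamma(B_X(0,c)))\subset B_X(0,c)$, so that $\Gamma(K)\subset\Gamma(B_X(0,c))\subset K$ and $\Gamma\colon K\to K$ is continuous; $K$ is still equicontinuous at each point and $K(t)$ is still relatively compact in $E$ (a closed convex hull of such a set retains both properties when the range is finite dimensional). By Arzel\`a--Ascoli on each $[-m,m]$ together with a Cantor diagonal extraction, every sequence in $K$ has a subsequence converging uniformly on compacts, and then the second point of $(HV_1)$ provides a further subsequence along which $\Gamma$ converges in $\|\cdot\|_{\infty}$; hence $\Gamma(K)$ is relatively compact for $\|\cdot\|_{\infty}$ and Schauder's fixed point theorem gives $x^*\in K\subset B_X(0,c)$ with $\Gamma(x^*)=x^*$, which solves $(\widetilde E)$ by the differential identity for $T$. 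Under $(HV_0)$ and $(\widetilde{HV}_1)$, the two-variable estimate gives $\|\Gamma(x)-\Gamma(y)\|_{\infty}\le\max(\tfrac{r}{l^{2}},\tfrac1l)(L_F+L_G)\|x-y\|_{\infty}$ with ratio strictly less than $1$, so $\Gamma$ is a contraction of the complete metric space $B_X(0,c)$; the Banach--Picard theorem then yields the unique fixed point $x^*\in B_X(0,c)$, the unique solution of $(\widetilde E)$ in $B_X(0,c)$.

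The only step requiring any thought beyond transcription is the pointwise compactness: in Section~\ref{S1} the set $\Gamma(x)(t)$ lay in the compact interval $[k,M]$ automatically, whereas here it lies in the ball $B_E(0,c)$, and it is precisely the hypothesis $\dim E<+\infty$ that makes this ball-valued family relatively compact and keeps the Arzel\`a--Ascoli argument running. Apart from that, the whole proof is mechanical, the only bookkeeping being the operator norm $\|e^*\|$ that now accompanies every occurrence of $G$; everything else — the two integral identities, the a priori derivative bound behind equicontinuity, the contraction constant — transfers with no change.
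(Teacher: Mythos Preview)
Your proposal is correct and follows exactly the approach the paper indicates: the paper gives no separate proof for this theorem, merely stating that it is similar to that of Theorem~\ref{existence}. You have faithfully carried out that transcription and correctly identified the one genuinely new ingredient, namely that finite dimensionality of $E$ is what makes the pointwise sets $\Gamma(B_X(0,c))(t)\subset B_E(0,c)$ relatively compact so that the Arzel\`a--Ascoli step still runs.
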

The proof of the above theorem is similar to those given for Theorem \ref{existence}. 
\vskip5mm
In the case of $X=PAP(\R,\R^n)$ for example (see \cite{Dt}, for the definition of vector-valued pseudo almost periodic), there exists  a canonical way to choose a function $G$ satisfying the property $(\widetilde{HV}_1)$ from the real-valued framework. It suffises  to choose functions $H: PAP(\R,\R^n)\to PAP(\R)$, and $(e,e^*)\in \R^n\times (\R^n)^*$ such that $e^*(e)=1$ and pose $G: PAP(\R,\R^n)\to PAP(\R,\R^n)$ by setting $G(x,t):=H(x,t)e$. 

\begin{remark} The above theorem  is also true under the same hypothesis for the following equation (replacing $G$ by $-G$):
$$(\widetilde{E}) \hspace{3mm} x'(t) -e^*(G(x,t)) x(t) = F(x,t), t\in \R.$$
To see this, just follow the same proof of Theorem \ref{existence} using  the operator $\tilde{T}$ instead of $T$, where :
\begin{eqnarray*}
\tilde{T}: BC(\R,E) \times BC(\R,E)_{[l,+\infty[,e^*}&\to& BC(\R,E)\\
(f,g) &\mapsto& [t\mapsto \int_t^{+\infty} e^{\int_s^t e^*(g(u))du} f(s) ds].
\end{eqnarray*}
and the operator $\widetilde{\Gamma}(x):=\widetilde{T}(e^*\circ G(x),-F(x))$ for all $x\in X$, instead of $\Gamma$ in Lemma \ref{PAP}.
\end{remark}
%%%%%%%%%%%%%%%%%%%%%%%%%%

\section*{Acknowledgment}
This research has been conducted within the FP2M federation (CNRS FR 2036) and  SAMM Laboratory of the University Paris Panthéon-Sorbonne. 

\bibliographystyle{amsplain}

\end{document}